\documentclass[a4paper,11pt]{article}
\usepackage[margin=2.5cm]{geometry}

\usepackage{amsfonts}
\usepackage{amsmath}
\usepackage{amsthm}
\usepackage{amssymb}
\usepackage{cleveref}
\usepackage{graphicx}
\usepackage{epstopdf}
\usepackage{algorithm}
\usepackage{algpseudocode}
\usepackage{bm}
\usepackage{url}
\ifpdf
  \DeclareGraphicsExtensions{.eps,.pdf,.png,.jpg}
\else
  \DeclareGraphicsExtensions{.eps}
\fi

\usepackage{pgfplots,pgfplotstable}
\pgfplotsset{compat=newest}
\usepackage{tikz}
\usetikzlibrary{shapes,arrows,snakes,mindmap,patterns}
\tikzstyle{decision} = [diamond, draw, fill=blue!20, text badly centered, node distance=3cm, inner sep=0pt]
\tikzstyle{sdecision} = [diamond, draw, fill=blue!20, text centered, node distance=3cm, inner sep=0pt, text width=3cm]

\tikzstyle{block} = [rectangle, draw, fill=blue!20, text centered, rounded corners]
\tikzstyle{sblock} = [rectangle, draw, fill=blue!20, text centered, rounded corners, text width = 6cm]

\tikzstyle{line} = [draw, -latex']
\tikzstyle{cloud} = [draw, ellipse,fill=red!20, node distance=3cm,
    minimum height=2em]
\tikzstyle{scloud} = [draw, ellipse,fill=red!20, node distance=3cm,
    minimum height=2em,text width = 5 cm]

\definecolor{blau0} {RGB}{ 131 198 216} 
\definecolor{blau}  {RGB}{  0  84 159}
\definecolor{grau}  {RGB}{  0  84 159}
\definecolor{grun}  {RGB}{  0 160   0}
\definecolor{rot}   {RGB}{204   7  30}
\definecolor{blau2} {RGB}{  0  61 128}
\definecolor{grun2} {RGB}{  0 85   0}
\definecolor{rot2}  {RGB}{120   7  30}
\definecolor{gelb}  {RGB}{70 70 70}

\newtheorem{theorem}{Theorem}
\newtheorem{lemma}[theorem]{Lemma}
\newtheorem{remark}[theorem]{Remark}

\numberwithin{equation}{section}
\numberwithin{theorem}{section}


\newcommand{\llangle}{\left\langle}
\newcommand{\rrangle}{\right\rangle}


\title{On the rate of convergence of alternating minimization for non-smooth non-strongly convex optimization in Banach spaces 
}

\author{Jakub W.\ Both\thanks{Department of Mathematics, University of Bergen, Bergen, Norway; $\{$\texttt{jakub.both@uib.no}$\}$}}

\date{}

\begin{document}

\maketitle

\begin{abstract}
  In this paper, the convergence of alternating minimization is established for non-smooth convex optimization in Banach spaces, and novel rates of convergence are provided. As objective function a composition of a smooth and a non-smooth part is considered with the latter being block-separable, e.g., corresponding to convex constraints or regularization. For the smooth part, three different relaxations of strong convexity are considered: (i) quasi-strong convexity; (ii) quadratic functional growth; and (iii) plain convexity. Linear convergence is established for the first two cases, generalizing and improving previous results for strongly convex problems; sublinear convergence is established for the third case, also improving previous results from the literature. All the convergence results have in common, that opposing to previous corresponding results for the general block coordinate descent, the performance of the alternating minimization is beneficially governed by properties of the single blocks, instead of global properties. Ultimately, not only the better conditioned block determines the performance, as has been similarly observed in the literature. But also the worse conditioned problem enhances the performance additionally, resulting in potentially significantly improved convergence rates. Furthermore, by solely using the convexity and smoothness properties of the problem, the results immediately apply in general Banach spaces.\\[0.5cm]
 \textbf{Key words.} convex optimization, alternating minimization, rate of convergence, linear convergence, sublinear convergence\\[0.5cm]
 \textbf{AMS subject classifications.} 90C25, 65K05
\end{abstract}

\section{Introduction}
The (cyclic) block coordinate descent, also referred to in the literature as non-linear block Gauss-Seidel or successive subspace correction method, is a classical and fundamental optimization algorithm~\cite{Ortega1970,Bertsekas1999}. Given a minimization problem with a block structure, it consists of the successive (exact) minimization with respect to the single blocks. Since numerous applications naturally inherit a block structure, block coordinate descent algorithms have been of great interest for decades, including variations as random coordinate descents and the successive inexact minimization, based, e.g., on the projected gradient descent or proximal point minimization. For an overview, we refer to the review paper~\cite{Wright2015}.
 
The convergence of the block coordinate descent has been extensively studied under various convexity and smoothness properties of the objective function, typically in Euclidean spaces. For instance, convergence of the algorithm has been established for non-smooth strongly convex optimization by Auslender~\cite{Auslender1976}, and for various sets of convexity assumptions as, e.g., quasi-convexity with respect to each block by Grippo and Sciandrone~\cite{Grippo1999,Grippo2000}. Furthermore, Bertsekas~\cite{Bertsekas1999} showed that any accumulation point of the sequence generated by the method is a stationary point if the successive minimization with respect to each block is well-defined. In the context of domain decomposition methods, Tai and Espedal~\cite{Tai1998} established a linear rate of convergence for the multiplicative Schwarz method applied to smooth strongly convex problems, which can ultimately be identified as the block coordinate descent method. Luo and Tseng~\cite{Luo1993} showed a linear rate of convergence for feasible descent methods under the existence of a local error bound of the objective function (generalizing strong convexity), and a proper separation of isocost surfaces; the block coordinate descent method satisfies the feasible descent property, e.g., for block coordinatewise strongly convex functions. Lately, Necoara et al.~\cite{Necoara2019} identified the class of of smooth convex functions satisfying a quadratic functional growth property as the largest class of functions for which feasible descent methods converge linearly, and provide linear convergence rates.

In this paper, we focus on the application of the block coordinate descent to the two-block structured model problem
\begin{align}
\label{model-problem-intro}
 \mathrm{min}  \left\{ H(x_1,x_2) \equiv f(x_1,x_2) + g_1(y) + g_2(z) \, \big| \, (x_1,x_2)\in \mathcal{B}_1 \times \mathcal{B}_2 \right\},
\end{align}
where $\mathcal{B}_1,\mathcal{B}_2$ are Banach spaces, $f$ is convex and smooth, and $g_1,g_2$ are non-smooth but convex, allowing, e.g., for block-separable constraints or non-smooth regularization (detailed properties are specified in \cref{section:problem}). In the case of just two blocks, the block coordinate descent is also often referred to as \textit{alternating minimization} (cf.~\cref{section:am}). Since problems of type~\eqref{model-problem-intro} are present in various applications, alternating minimization is widely used as a decoupling technique -- in particular if it is much more convenient or feasible to solve the corresponding subproblems instead of the globally coupled problem. This is has been for instance exploited in the context of the iteratively reweighted least squares method~\cite{Beck2015}, or the block partitioned solution of coupled partial differential equations, cf., e.g.,~\cite{Both2019}.

The presence of just two blocks allows for an improved convergence analysis of the alternating minimization compared to the general block coordinate descent. For (unconstrained) smooth strongly convex optimization, i.e., smooth and strongly convex $f$, and $g_1\equiv g_2 \equiv 0$, linear convergence has been established by Beck and Tetruashvili~\cite{Beck2013} with the rate just depending on the convexity modulus and the minimum of the Lipschitz constant of the partial derivatives, instead of a global one. Also for a smooth (simply) convex $f$, and convex $g_1,g_2$, sublinear convergence has been proved by Beck~\cite{Beck2015}. Again the multiplicative constant has been showed to only depend on the minimum of the Lipschitz constants of the partial derivatives, instead of a global one. The aforementioned results are constrained to (finite-dimensional) Euclidean spaces equipped with the $l_2$ norm. The proofs essentially utilize knowledge on first-order gradient descent methods as the (proximal) block coordinate descent. To our best knowledge, those results are the finest theoretical convergence results in the literature for alternating minimization.

In practice, strong convexity is a rather strong requirement, which often is not met. It is natural to ask whether linear convergence can be also achieved under more relaxed conditions. This is in particular motivated by the work of Necoara~\cite{Necoara2019}, in which linear convergence has been established for the general block gradient descent under various relaxations of strong convexity, including quasi-strong convexity and quadratic functional growth. To the author's knowledge, an improved analysis of the specific alternating minimization under such conditions has not been provided in the literature, yet.

Considering Banach spaces in the convergence analysis of the alternating minimization may have several benefits. We mention two. First, by allowing for describing smoothness and convexity properties of the problem with respect to problem-specific norms, sharper theoretical convergence rates may be derived -- even for problems in finite dimensional Euclidean spaces. And second, as already mentioned, alternating minimization can be applied for instance to develop robust splitting schemes for coupled partial differential equations. Abstract convergence results of the alternating minimization holding for infinitely dimensional Banach spaces may then allow for analyzing the convergence of the resulting scheme, independent of the discretization, cf., e.g.~\cite{Both2019} in the context of the Biot equations describing flow in deformable porous media.

In this work, we aim at complementing previous results on the convergence of the fundamental alternating minimization and generalize those for non-strongly convex problems of type~\eqref{model-problem-intro}. In this regard, our main contributions are convergence results for three different settings with a decreasing demand on the convexity properties of the problem:
\begin{itemize}
 \item Linear convergence assuming a quasi-strongly convex $f$, generalizing and improving the results in~\cite{Beck2013} for unconstrained smooth strongly convex optimization in Euclidean spaces. The final result is assessed numerically and by that demonstrated to be sharp.
 \item Linear convergence for convex $H$ with quadratic functional growth without the explicit need of feasible descent as, e.g., required by~\cite{Necoara2019}, cf.\ \cref{section:strongly-convex}.
 \item Sublinear convergence for convex $f$, generalizing and improving the results in~\cite{Beck2015} for convex optimization in Euclidean spaces, cf.\ \cref{section:convex}.
\end{itemize}
All results have in common, that opposing to corresponding results for the general block coordinate descent, the performance of the alternating minimization gains from properties of the separate single blocks instead of global properties. Furthermore, by solely utilizing the basic definitions of convexity and smoothness properties as well as the definition of the alternating minimization in the proofs, all results hold in Banach spaces.
To the best of the author's knowledge, all results are novel; in particular, the case of quadratic functional growth without feasible descent has not been studied in the literature, yet.

\section{The two-block structured model problem}\label{section:problem}
We consider the problem
\begin{align}
\label{model-problem}
 \mathrm{min}  \left\{ H(x_1,x_2) \equiv f(x_1,x_2) + g_1(y) + g_2(z) \, \big| \, (x_1,x_2)\in \mathcal{B}_1 \times \mathcal{B}_2 \right\},
\end{align}
where $\mathcal{B}_1,\mathcal{B}_2,f,g_1,g_2$ satisfy the following properties:
\begin{itemize}
 \item[(P1)] The feasible sets $\mathcal{B}_i$ are Banach spaces equipped with norms $\|\cdot\|_{i}$, $i=1,2$. Let $\mathcal{B}_i^\star$ denote the dual space of $\mathcal{B}_i$ equipped with the canonical dual norm $\| \cdot \|_{i,\star}$, and let $\llangle \cdot, \cdot \rrangle_i$ denote the duality product on $\mathcal{B}_i^\star \times \mathcal{B}_i$. If clear from the context, we omit specifying the index $i=1,2$ in duality pairings.

 \item[(P2)] The product space $\mathcal{B}_1 \times \mathcal{B}_2$ is equipped with a norm $\|\cdot\|$ and $\beta_1,\beta_2\geq 0$, satisfying for all $(x_1,x_2)\in \mathcal{B}_1 \times \mathcal{B}_2$
 \begin{subequations}
 \label{definition-beta}
 \begin{align}
  \|(x_1,x_2)\|^2 &\geq \beta_1\|x_1 \|_1^2,\\
  \|(x_1,x_2)\|^2 &\geq \beta_2\|x_2 \|_2^2,
 \end{align} 
 \end{subequations}
 and the duality pairing $\llangle \cdot, \cdot \rrangle$ satisfying for all $d_i\in\mathcal{B}_i^\star$ and $x_i\in\mathcal{B}_i$, $i=1,2$,
 \begin{align*}
  \llangle (d_1,d_2), (x_1,x_2) \rrangle &:= \llangle d_1, x_1 \rrangle + \llangle d_2, x_2 \rrangle.
 \end{align*}
 
 \item[(P3)] The functions $g_1 : \mathcal{B}_1 \rightarrow \mathbb{R} \cup \{ \infty\}$, $g_2 : \mathcal{B}_2 \rightarrow \mathbb{R} \cup \{ \infty\}$ are proper convex functions, which are (Fr\'echet) subdifferentiable on their domains, denoted by $\mathrm{dom}\, g_1$ and $\mathrm{dom}\,g_2$, respectively. Let their (Fr\'echet) subdifferentials be denoted by $\partial g_1$ and $\partial g_2$.
 
 \item[(P4)] The function $f:\mathcal{B}_1 \times \mathcal{B}_2 \rightarrow \mathbb{R}$ is convex and (Fr\'echet) differentiable over $\mathrm{dom}\,g_1 \times \mathrm{dom}\,g_2$. Let $\nabla f$ denote the (Fr\'echet) derivative of $f$.
 
 \item[(P5)] The partial (Fr\'echet) derivatives of $f$ with respect to the first and second components, denoted by $\nabla_1 f \in \mathcal{B}_1^\star$, $\nabla_2 f \in \mathcal{B}_2^\star$, respectively, are \textit{Lipschitz continuous} such that there exist 
 $L_1,L_2\in(0,\infty]$ satisfying
 \begin{align}
  \label{model-property-lipschitz-f-1a}
  \left\| \nabla_1 f(x_1 + h_1,x_2) - \nabla_1 f(x_1,x_2) \right\|_{1,\star} &\leq L_1 \| h_1 \|_1,\\
   \label{model-property-lipschitz-f-2a}
  \left\| \nabla_2 f(x_1 ,x_2+h_2) - \nabla_2 f(x_1,x_2) \right\|_{2,\star} &\leq L_2 \| h_2 \|_2,
 \end{align}
 or equivalently (by block versions of the so-called descent lemma~\cite{Bertsekas1999,Beck2015})
 \begin{align}
 \label{model-property-lipschitz-f-1}
  f(x_1+h_1,x_2) &\leq f(x_1,x_2) + \llangle \nabla_1 f(x_1,x_2),h_1 \rrangle + \frac{L_1}{2} \left\| h_1 \right\|_1^2,\\
  \label{model-property-lipschitz-f-2}
  f(x_1,x_2+h_2) &\leq f(x_1,x_2) + \llangle \nabla_2 f(x_1,x_2),h_2 \rrangle + \frac{L_2}{2} \left\| h_2 \right\|_2^2,
 \end{align}
 for all $(x_1,x_2)\in \mathrm{dom}\, g_1 \times \mathrm{dom}\, g_2$, $h_1 \in \mathcal{B}_1$, such that $x_1+h_1 \in \mathrm{dom}\, g_1$, and $h_2 \in \mathcal{B}_2$ such that $x_2+h_2 \in \mathrm{dom}\, g_2$. We explicitly assume that
 \begin{align*}
  \mathrm{min}\{L_1,L_2\} < \infty,
 \end{align*}
 i.e., we allow for non-Lipschitz continuity of one of the partial derivatives.

 \item[(P6)] The optimal set of the problem~\eqref{model-problem}, denoted by $X \subset \mathcal{B}_1 \times \mathcal{B}_2$ is non-empty, and the corresponding optimal value is denoted by $H^\star$.
 
\end{itemize}

The model problem~\eqref{model-problem} satisfying the properties (P1)--(P6) covers a wide range of problems including, for instance, smooth convex optimization under block-separable convex constraints and non-smooth block-separable regularization. For examples, we refer to~\cite{Beck2015}.

We remark that the use of an arbitrary norm $\|\cdot\|$ (instead of, e.g., the canonical norm on product spaces $\|\cdot\|^2 = \|\cdot\|_1^2 + \| \cdot \|_2^2$), and the associated introduction of $\beta_1$ and $\beta_2$, cf.\ (P2), are going to significantly contribute in the development of improved convergence rates of the alternating minimization in~\cref{section:strongly-convex,section:quadratic,section:convex}.

\section{Alternating minimization}\label{section:am}

In the following, we focus on the iterative solution of problem~\eqref{model-problem} by the classical \textit{alternating minimization}, as described in \cref{algorithm:alternating-minimization}. In order to make the alternating minimization well-defined, we further assume on model problem~\eqref{model-problem}:
\begin{itemize}
 \item[(P7)] For any $(\tilde{x}_1,\tilde{x}_2)\in \mathrm{dom}\, g_1 \times \mathrm{dom}\, g_2$, the following problems have minimizers
 \begin{align*}
  \underset{x_1\in\mathcal{B}_1}{\mathrm{min}}\, H(x_1,\tilde{x}_2),\\
  \underset{x_2\in\mathcal{B}_2}{\mathrm{min}}\, H(\tilde{x}_1,x_2).
 \end{align*}
\end{itemize}

\begin{algorithm}[h!]
\caption{Alternating minimization}
\label{algorithm:alternating-minimization}

 
  \vspace{0.25em}
 
  \textbf{Initialization:} $x^{0} = (x_1^{0},x_2^{0}) \in \mathrm{dom}\,g_1 \times \mathrm{dom}\,g_2$, such that 
  \begin{align}
  \label{alternating-minimization-step-0}
   x_2^{0} \in \mathrm{arg\,min}\left\{ H(x_1^{0}, x_2) \, \big| \, x_2 \in \mathcal{B}_2 \right\}.
  \end{align}
  \textbf{General step:} For $k=0,1,...$, given $x^k \in \mathrm{dom}\,g_1 \times \mathrm{dom}\,g_2$, find $x^{k+1} \in \mathcal{B}_1 \times \mathcal{B}_2$ such that
  \begin{align}
  \label{alternating-minimization-step-1}
   x_1^{k+1} &\in \mathrm{argmin} \left\{ H(x_1,x_2^k) \, \big| \, x_1 \in \mathcal{B}_1 \right\},\\
   \label{alternating-minimization-step-2}
   x_2^{k+1} &\in \mathrm{argmin} \left\{ H(x_1^{k+1},x_2) \, \big| \, x_2 \in \mathcal{B}_2 \right\}.
  \end{align}  
  \textbf{Abbreviation:} For $k=0,1,...$, define $x^{k+1/2} := (x_1^{k+1},x_2^k)$, $H^k:=H(x_1^k,x_2^k)$, $H^{k+1/2}:=H(x_1^{k+1},x_2^k)$.
\end{algorithm}
We remark the partial optimality condition~\eqref{alternating-minimization-step-0} on the initial guess, which corresponds to one half step of the alternating minimization. As in~\cite{Beck2015}, this is simply chosen for convenience and the sake of simpler notation in the subsequent analysis -- in particular \cref{lemma:optimality-conditions}. In view of the following lemma, for an overview on sub\-differentials of convex functions in Banach spaces and optimality conditions, we refer to~\cite{Mordukhovich2006}.

\begin{lemma}[Optimality conditions for iterates of the alternating minimization]\label{lemma:optimality-conditions}
Let $\{x^k = (x_1^k,x_2^k)\}_{k\geq 0}$ denote the sequence generated by the alternating minimization, cf. \cref{algorithm:alternating-minimization}. Then for all $k\geq 0$, it holds $x_1^{k+1} \in \mathrm{dom}\, g_1$, $x_2^k \in \mathrm{dom}\,g_2$, and
\begin{alignat}{2}
\label{result:optimality-conditions-1}
g_1(x_1^{k+1}) - g_1(x_1) &\leq - \llangle \nabla_1 f(x_1^{k+1},x_2^k), x_1^{k+1} - x_1 \rrangle&\quad&\forall x_1\in \mathrm{dom}\,g_1,\\
\label{result:optimality-conditions-2}
g_2(x_2^k) - g_2(x_2) &\leq - \llangle \nabla_2 f(x_1^k, x_2^k), x_2^k - x_2 \rrangle&\quad&\forall x_2\in \mathrm{dom}\,g_2.
\end{alignat}
\end{lemma}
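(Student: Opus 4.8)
The plan is to derive both inequalities directly from the first-order optimality of the two partial minimization subproblems solved in each sweep, invoking only the convexity of $g_1, g_2$ from (P3) and the Fr\'echet differentiability of $f$ from (P4), so that no separate sum rule for subdifferentials in Banach spaces is needed. First I would record which subproblem each iterate solves. By the general step \eqref{alternating-minimization-step-1}, for every $k \geq 0$ the iterate $x_1^{k+1}$ minimizes $x_1 \mapsto H(x_1, x_2^k)$, equivalently (dropping the constant $g_2(x_2^k)$) the function $x_1 \mapsto f(x_1, x_2^k) + g_1(x_1)$. For the second block, $x_2^k$ minimizes $x_2 \mapsto H(x_1^k, x_2)$: for $k = 0$ this is exactly the imposed partial optimality \eqref{alternating-minimization-step-0} of the initial guess, while for $k \geq 1$ it is the step \eqref{alternating-minimization-step-2} at iteration $k-1$. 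Since $g_1, g_2$ are proper and (P7) guarantees the minimizers exist, the attained values are finite, whence $x_1^{k+1} \in \mathrm{dom}\, g_1$ and $x_2^k \in \mathrm{dom}\, g_2$, which settles the membership claims.

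For \eqref{result:optimality-conditions-1} I would use a variational perturbation. Fixing any $x_1 \in \mathrm{dom}\, g_1$ and $t \in (0,1]$, I would test the optimality of $x_1^{k+1}$ against the competitor $x_1^{k+1} + t(x_1 - x_1^{k+1})$, which remains in $\mathrm{dom}\, g_1$ since the domain of a convex function is convex. Bounding the nonsmooth part from above by convexity, $g_1(x_1^{k+1} + t(x_1 - x_1^{k+1})) \leq (1-t)g_1(x_1^{k+1}) + t g_1(x_1)$, and expanding the smooth part along the feasible direction via differentiability, $f(x_1^{k+1} + t(x_1 - x_1^{k+1}), x_2^k) = f(x_1^{k+1}, x_2^k) + t \llangle \nabla_1 f(x_1^{k+1}, x_2^k), x_1 - x_1^{k+1} \rrangle + o(t)$, the constant terms cancel in the optimality inequality; dividing by $t > 0$ and letting $t \to 0^+$ yields $\llangle \nabla_1 f(x_1^{k+1}, x_2^k), x_1 - x_1^{k+1} \rrangle + g_1(x_1) - g_1(x_1^{k+1}) \geq 0$, which rearranges to exactly \eqref{result:optimality-conditions-1}.

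The second inequality \eqref{result:optimality-conditions-2} then follows by the verbatim symmetric argument applied to the minimizer $x_2^k$ of $x_2 \mapsto f(x_1^k, x_2) + g_2(x_2)$, with $\nabla_2 f(x_1^k, x_2^k)$ replacing $\nabla_1 f$. I do not anticipate a genuine obstacle: the result is essentially the defining subgradient inequality written out. The only points that require care are formal and are all consistent with a general Banach space -- the perturbed competitor must stay in the convex domain, the first-order expansion must rest on the Fr\'echet differentiability assumed in (P4), giving a remainder $o(t)$ for the fixed feasible direction, and one must track the indices carefully so that \eqref{result:optimality-conditions-2} is valid already at $k = 0$, which is precisely the reason the initialization \eqref{alternating-minimization-step-0} is imposed.
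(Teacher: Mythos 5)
Your proof is correct, and it reaches the result by a genuinely more elementary route than the paper. The paper's proof is a two-line appeal to subdifferential calculus: the first-order optimality condition for the block subproblem is stated as $0 \in \nabla_1 f(x_1^{k+1},x_2^k) + \partial g_1(x_1^{k+1})$ (implicitly using Fermat's rule together with the sum rule for a Fr\'echet differentiable plus a convex subdifferentiable function, for which the paper points to the Mordukhovich--Nam--Yen reference), whence $-\nabla_1 f(x_1^{k+1},x_2^k) \in \partial g_1(x_1^{k+1})$ and \eqref{result:optimality-conditions-1} is literally the defining subgradient inequality; the second inequality and the $k=0$ case are handled exactly as you do, via \eqref{alternating-minimization-step-2} at iteration $k-1$ and the initialization \eqref{alternating-minimization-step-0}. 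Your perturbation argument --- testing optimality against $x_1^{k+1} + t(x_1 - x_1^{k+1})$, bounding $g_1$ by convexity, expanding $f$ to first order, dividing by $t$ and sending $t \to 0^+$ --- is in essence an inlined proof of that optimality condition, so nothing external to (P3), (P4), (P7) is needed; this is exactly what you announce and it is a legitimate gain in self-containedness for the Banach-space setting, where sum rules for subdifferentials otherwise call for citation or qualification hypotheses. You are also more explicit than the paper on the membership claims, deducing $x_1^{k+1} \in \mathrm{dom}\, g_1$ and $x_2^k \in \mathrm{dom}\, g_2$ from properness of $g_1, g_2$ and finiteness of $f$, where the paper simply asserts them as part of the optimality condition. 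What the paper's route buys in exchange is brevity and a statement ($-\nabla_1 f \in \partial g_1$) that is reusable as such; what yours buys is independence from subdifferential calculus in Banach spaces. Both are sound, and the index bookkeeping (including why \eqref{result:optimality-conditions-2} holds already at $k=0$) matches the paper's.
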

 
\begin{proof}
 By construction, the optimality condition corresponding to the first step of the alternating minimization, defining $\{x_1^{k}\}_{k\geq 1}$, cf.~\cref{alternating-minimization-step-1}, reads $x_1^{k+1}\in \mathrm{dom}\, g_1$ and $0 \in \nabla_1 f(x_1^{k+1},x_2^k)+ \partial g_1(x_1^{k+1})$ for all $k\geq 0$. Thus, $-\nabla_1 f(x_1^{k+1},x_2^k) \in \partial g_1(x_1^{k+1})$, which by definition of a subdifferential is equivalent with~\cref{result:optimality-conditions-1}.
 
 Analogously,~\cref{result:optimality-conditions-2} follows by considering the second substep of the alternating minimization~\eqref{alternating-minimization-step-2}, defining $\{x_2^k\}_{k\geq 1}$. For $k=0$,~\cref{result:optimality-conditions-2} follows by construction of the initial guess, cf.~\eqref{alternating-minimization-step-0}.
\end{proof}

Moreover, being identical with successive minimization, $H^k$ satisfies the monotonicity principle
\begin{align*}
 H^0 \geq H^{1/2} \geq H^1 \geq ... \geq H^k \geq H^{k+1/2} \geq H^{k+1} \geq ...\qquad\text{for all }k\in\mathbb{N}_0.
\end{align*}

\section{Quasi-strongly convex case}\label{section:strongly-convex}

In this section, linear convergence is established for the alternating minimization applied to model problem~\eqref{model-problem} under the additional assumption of quasi-strong convexity for the smooth part of $H$:
\begin{itemize}
 \item[(P8a)] The function $f:\mathcal{B}_1\times\mathcal{B}_2 \rightarrow \mathbb{R}$ is \textit{quasi-strongly convex} with modulus $\sigma>0$, i.e., 
  for all $x=(x_1,x_2)\in \mathrm{dom}\, g_1 \times \mathrm{dom}\, g_2$ and $\bar{x}=\mathrm{arg\,min}\left\{\|x-y\|\,\big| \, y\in X \right\}$ being the orthogonal projection of $x$ onto $X$, it holds that
 \begin{align*}
  f(\bar{x}) \geq f(x) + \llangle \nabla f(x), \bar{x} - x \rrangle + \frac{\sigma}{2} \|x - \bar{x}\|^2.
 \end{align*}
\end{itemize}
By the convexity of $g_1$ and $g_2$, $H$ inherits quasi-strong convexity from $f$. It is interesting to mention that a quasi-strongly convex function does not even require to be convex; however, any strongly convex function is clearly quasi-strongly convex.

The following result generalizes and improves a convergence result in~\cite{Beck2013}.

\begin{theorem}[Linear convergence under quasi-strong convexity]\label{theorem:convergence-strong-convexity}
 Assume that $\mathrm{(P1)}$--$\mathrm{(P7)}$ and $\mathrm{(P8a)}$ are satisfied. Let $\{x^k\}_{k\geq 0}$ be the sequence generated by the alternating minimization, cf.\ \cref{algorithm:alternating-minimization}, and $H^k:=H(x^k)$. Then  for all $k\geq 0$ it holds that
 \begin{align*}
  H^{k} - H^\star 
  \leq \left[\left(1 - \frac{\sigma\beta_1}{L_1} \right) \left(1 - \frac{\sigma\beta_2}{L_2} \right)\right]^k \left( H^{0} - H^\star \right).
 \end{align*}
 In the case of $\mathrm{max}\left\{\tfrac{L_1}{\beta_1},\tfrac{L_2}{\beta_2}\right\}=\infty$ this has to be understood as
\begin{align}
\label{final-result:strong-convex-2}
  H^{k} - H^\star 
  \leq \left(1 - \frac{\sigma}{\mathrm{min}\left\{\tfrac{L_1}{\beta_1},\tfrac{L_2}{\beta_2}\right\}} \right)^k \left( H^{0} - H^\star \right).
 \end{align}
\end{theorem}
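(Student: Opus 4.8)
The plan is to reduce the full-step contraction to two independent half-step contractions
\begin{align*}
H^{k+1/2} - H^\star &\le \left(1 - \tfrac{\sigma\beta_1}{L_1}\right)\left(H^k - H^\star\right), \\
H^{k+1} - H^\star &\le \left(1 - \tfrac{\sigma\beta_2}{L_2}\right)\left(H^{k+1/2} - H^\star\right),
\end{align*}
then to multiply them and iterate over $k$. Because of the symmetry of the two substeps of \cref{algorithm:alternating-minimization}, it suffices to establish the first inequality in detail and invoke the mirrored estimate for the second. Throughout, let $\bar{x} = (\bar x_1, \bar x_2)\in X$ be the projection of $x^k$ onto $X$ as in $\mathrm{(P8a)}$, and set $D := \|x^k - \bar x\|$ and $p := H^k - H^\star \ge 0$.

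First I would establish an upper bound on the optimality gap involving only first-block quantities. Applying quasi-strong convexity $\mathrm{(P8a)}$ to $f$ at $x^k$, adding the convexity estimates for $g_1,g_2$, and splitting the duality pairing blockwise via $\mathrm{(P2)}$ gives
\[
p \le \llangle \nabla_1 f(x^k), x_1^k - \bar x_1\rrangle + g_1(x_1^k) - g_1(\bar x_1) + \llangle \nabla_2 f(x^k), x_2^k - \bar x_2\rrangle + g_2(x_2^k) - g_2(\bar x_2) - \tfrac{\sigma}{2} D^2.
\]
The crucial observation is that the second-block terms are nonpositive: since $x_2^k$ is optimal for $H(x_1^k,\cdot)$, \cref{result:optimality-conditions-2} with $x_2 = \bar x_2$ yields exactly $\llangle \nabla_2 f(x^k), x_2^k - \bar x_2\rrangle + g_2(x_2^k) - g_2(\bar x_2) \le 0$. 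Writing $A := \llangle \nabla_1 f(x^k), x_1^k - \bar x_1\rrangle + g_1(x_1^k) - g_1(\bar x_1)$, this leaves $p \le A - \tfrac{\sigma}{2}D^2$, and in particular $A \ge 0$.

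Next I would quantify the decrease of the first substep. Since $x_1^{k+1}$ minimizes $H(\cdot, x_2^k)$, for every $t\in[0,1]$ the feasible point $x_1^k + t(\bar x_1 - x_1^k)\in\mathrm{dom}\,g_1$ is admissible; combining the descent lemma \eqref{model-property-lipschitz-f-1} with convexity of $g_1$ along this segment gives
\[
H^{k+1/2} - H^k \le -tA + \tfrac{L_1 t^2}{2}\|\bar x_1 - x_1^k\|_1^2.
\]
Parametrizing the trial point by the scalar $t$ along the segment toward $\bar x_1$ — rather than a duality-map gradient step — is precisely what keeps the argument valid in a general Banach space. Optimizing the right-hand side over $t\in[0,1]$ and using $\|\bar x_1 - x_1^k\|_1^2 \le \tfrac{1}{\beta_1}D^2$ from $\mathrm{(P2)}$, I obtain $H^k - H^{k+1/2} \ge \tfrac{A^2\beta_1}{2L_1 D^2}$. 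Finally, from $A \ge p + \tfrac{\sigma}{2}D^2 \ge 0$ and the elementary inequality $(a+b)^2\ge 4ab$ one gets $A^2 \ge 2\sigma D^2 p$, whence $H^k - H^{k+1/2} \ge \tfrac{\sigma\beta_1}{L_1}\, p$, i.e.\ the first half-step contraction.

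The second half-step is handled by the mirror argument: apply $\mathrm{(P8a)}$ at $x^{k+1/2} = (x_1^{k+1}, x_2^k)$, cancel the now-problematic \emph{first}-block terms using the first-block optimality \cref{result:optimality-conditions-1} (valid precisely at $(x_1^{k+1},x_2^k)$), and bound the decrease of the $x_2$-minimization exactly as above, producing the factor $1 - \tfrac{\sigma\beta_2}{L_2}$. Multiplying the two contractions and iterating yields the claimed rate; when one Lipschitz constant is infinite the corresponding factor equals $1$ and the bound collapses to \eqref{final-result:strong-convex-2}. I expect the main obstacle to be purely technical: carefully treating the boundary case of the scalar optimization (when the unconstrained optimal $t$ exceeds $1$), where one must exploit $\sigma\beta_1 \le L_1$ to retain the stated factor, together with the bookkeeping that the two conditions of \cref{lemma:optimality-conditions} are anchored at \emph{different} points and cancel opposite blocks in the two substeps.
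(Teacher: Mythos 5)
Your proposal is correct and follows essentially the same route as the paper's proof: the same reduction to two symmetric half-step contractions, the same cancellation of the off-block terms via \cref{lemma:optimality-conditions}, and the same trial points $x_1^k + t(\bar{x}_1 - x_1^k)$ combined with the descent lemma and convexity of $g_1$. The only difference is the final scalar step: where you optimize over $t\in[0,1]$ and invoke $(a+b)^2\geq 4ab$ (creating the boundary case $t>1$ you flag, which does resolve using $\sigma\beta_1\leq L_1$), the paper simply fixes $t=\tfrac{\sigma\beta_1}{L_1}\in(0,1]$ from the outset, which gives $H^k-H^{k+1/2}\geq \tfrac{\sigma\beta_1}{L_1}\bigl(A-\tfrac{\sigma}{2}D^2\bigr)\geq\tfrac{\sigma\beta_1}{L_1}\left(H^k-H^\star\right)$ in one line and avoids any case distinction.
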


\begin{proof} 
We consider the first half-step of the alternating minimization and show
\begin{align}
\label{proof:strong-convex-result-1}
 H^{k+1/2} - H^\star \leq \left(1 - \frac{\sigma\beta_1}{L_1} \right) \left( H^{k} - H^\star\right)\qquad\text{for all }k\in\mathbb{N}_0.
\end{align}
Without loss of generality we assume that $\tfrac{L_1}{\beta_1}<\infty$; note that~\cref{proof:strong-convex-result-1} holds immediately for $\tfrac{L_1}{\beta_1}=\infty$. In order to prove~\cref{proof:strong-convex-result-1}, we first utilize the quasi-strong convexity of $f$, the definition of $\beta_1$, cf.~\cref{definition-beta}, a simple rescaling, the fact that $\tfrac{\sigma\beta_1}{L_1}\in(0,1]$, and Lipschitz continuity of $\nabla_1 f$, cf.~\cref{model-property-lipschitz-f-1}. For this, let $\bar{x}^k=(\bar{x}_1^k,\bar{x}_2^k):= \mathrm{argmin}\left\{ \|x-x^k\| \,\big| \, x\in X \right\} \in \mathrm{dom}\, g_1 \times \mathrm{dom}\, g_2$ such that $H^\star = H(\bar{x}^k)$. Ultimately, it holds that
\begin{align}
\label{proof:strong-convex:aux-1}
 f(x^k) - f(\bar{x}^k)
 &\leq 
 \llangle \nabla f(x^k), x^k - \bar{x}^k \rrangle - \frac{\sigma}{2} \left\| x^k - \bar{x}^k \right\|^2\\
 \nonumber
 &\leq 
 \llangle \nabla f(x^k), x^k - \bar{x}^k \rrangle - \frac{\sigma\beta_1}{2} \left\| x^k_1 - \bar{x}^k_1 \right\|_1^2\\
 \nonumber
 &=
 \frac{L_1}{\sigma\beta_1} \left[ \llangle \nabla_1 f(x^k), \frac{\sigma\beta_1}{L_1} \left(x^k_1 - \bar{x}^k_1\right) \rrangle - \frac{L_1}{2} \left\| \frac{\sigma\beta_1}{L_1} \left(x^k_1 - \bar{x}^k_1\right) \right\|_1^2 \right]\\
 \nonumber
 &\qquad 
 +
 \llangle \nabla_2 f(x^k), x^k_2 - \bar{x}^k_2 \rrangle\\
 \nonumber
 &\leq 
 \frac{L_1}{\sigma\beta_1} \left[  f(x^k) - f\left(x_1^k+\frac{\sigma\beta_1}{L_1} \left( \bar{x}^k_1 - x_1^k \right), x_2^k\right)\right]\\
 \nonumber
 &\qquad 
 +
 \llangle \nabla_2 f(x^k), x^k_2 - \bar{x}^k_2 \rrangle.
\end{align}
Furthermore,~\cref{definition-beta}, \cref{model-property-lipschitz-f-1}, and (P8a) imply that  $\tfrac{\sigma\beta_1}{L_1}\in(0,1]$. Thus, by convexity of $g_1$ it holds that
\begin{align*}
 g_1\left(\frac{\sigma\beta_1}{L_1} \bar{x}^k_1 + \left(1 - \frac{\sigma\beta_1}{L_1} \right) x_1^k\right)
 \leq 
 \frac{\sigma\beta_1}{L_1} g_1(\bar{x}^k_1)
 +
 \left(1 - \frac{\sigma\beta_1}{L_1}\right) g_1(x_1^k).
\end{align*}
By reordering terms, we obtain
\begin{align}
\label{proof:strong-convex:aux-2}
 g_1(x_1^k) - g_1(\bar{x}^k_1) \leq \frac{L_1}{\sigma\beta_1} \left[ g_1(x_k) - g_1\left(x_1^k + \frac{\sigma\beta_1}{L_1} \left( \bar{x}^k_1 - x_1^k\right) \right)\right].
\end{align}
By \cref{lemma:optimality-conditions}, it holds that
\begin{align}
\label{proof:strong-convex:aux-3}
g_2(x_2^k) - g_2(\bar{x}^k_2) \leq - \llangle \nabla_2 f(x^k), x_2^k - \bar{x}^k_2 \rrangle.
\end{align}
By combining~\cref{proof:strong-convex:aux-1,proof:strong-convex:aux-2,proof:strong-convex:aux-3}, we obtain for the objective function
\begin{align*}
 H^k - H^\star
 &= 
 f(x^k) - f(\bar{x}^k) + g_1(x_1^k) - g_1(\bar{x}^k_1) + g_2(x_2^k) - g_2(\bar{x}^k_2) \\
 &\leq 
  \frac{L_1}{\sigma\beta_1} \left[  H^k - H\left(x_1^k+\frac{\sigma\beta_1}{L_1} \left( \bar{x}^k_1 - x_1^k \right), x_2^k\right)\right].
\end{align*}
By employing the optimality property of $x_1^{k+1}$, cf.~\cref{alternating-minimization-step-1}, we obtain
\begin{align*}
 H^k - H^\star
 &\leq 
  \frac{L_1}{\sigma\beta_1} \left(  H^k - H^{k+1/2} \right).
\end{align*}
Reordering terms finally yields~\cref{proof:strong-convex-result-1}.

By symmetry, it analogously follows that
\begin{align}
\label{proof:strong-convex-result-2}
 H^{k+1} - H^\star \leq \left(1 - \frac{\sigma\beta_2}{L_2} \right) \left( H^{k+1/2} - H^\star\right).
\end{align}
Hence, by combining~\cref{proof:strong-convex-result-1,proof:strong-convex-result-2}, we obtain
 \begin{align*}
  H^{k+1} - H^\star 
  \leq \left(1 - \frac{\sigma\beta_1}{L_1} \right) \left(1 - \frac{\sigma\beta_2}{L_2} \right) \left( H^{k} - H^\star \right),
 \end{align*}
and the assertion follows by induction.
\end{proof}

\subsection{Comparison to the literature}

Linear convergence of the general block coordinate descent (for an arbitrary number of blocks) has been previously established for strongly and non-strongly convex optimization. In the following, we recall some results from the literature for a comparison with the convergence result in \cref{theorem:convergence-strong-convexity}, specific for alternating minimization. All results have been derived for problems stated in Euclidean spaces, equipped with $l_2$ norms, such that $\beta_1=\beta_2=1$ in~\cref{definition-beta}.
 
For smooth strong\-ly convex optimization subject to  block-separable convex constraints, the general block coordinate descent for $N\geq 2$ number of blocks, has been previously showed to converge q-linearly~\cite{Luo1993,Wang2014}. In particular, let $L$ denote the global Lipschitz constant of $\nabla f$. Then for all $k\geq 0$ it holds that
\begin{align*}
  H^k - H^\star \leq \left(1 - \frac{\sigma^2}{\sigma^2 + 2\left(L(1+\sqrt{N}) + 2\right)\!\!\left(\sigma + (L+1)\left(L\sqrt{N}+2\right)\right)}\right)^k \!\!\left( H^0 - H^\star\right).
\end{align*}

Recently, linear convergence has been also established for smooth objective functions with quadratic functional growth (see also \cref{section:quadratic}). This includes strongly and quasi-strongly convex functions~\cite{Necoara2019}. In particular, for all $k\geq 0$ it holds that
\begin{align*}
  H^k - H^\star \leq  \left(1 - \frac{\sigma^2}{\sigma^2 + 4\left(3 + \sqrt{N} \right)^2 L^2} \right)^k \left( H^0 - H^\star\right).
\end{align*}

In the context of domain decomposition methods for PDEs, linear convergence of a general multiplicative Schwarz method has been established~\cite{Tai1998}, which includes alternating minimization. Considering unconstrained smooth strongly convex optimization with the objective function being three times differentiable, asymptotic linear convergence was proved with asymptotic rate $1 - \tfrac{\sigma^2}{\sigma^2 + 8 L^2}$.

Opposing to the results for the general block coordinate descent, in the special case of just two blocks, linear convergence with further improved convergence rates is guaranteed. For instance, in~\cite{Beck2013}, linear convergence of the alternating minimization has been established for unconstrained smooth strongly convex optimization, i.e., the model problem~\eqref{model-problem} under the simplification $g_1\equiv g_2\equiv 0$. The final convergence result is identical with~\cref{final-result:strong-convex-2}. Thereby, convergence is ensured already if just one partial derivative is Lipschitz continuous. 

After all, the convergence result in~\cref{theorem:convergence-strong-convexity}, provides three novel improvements compared to all mentioned results:
 \begin{itemize}
 
  \item[(i)] The theoretical convergence rate has a multiplicative (i.e., squared) character if $\mathrm{max}\{L_1,L_2\}<\infty$.
  
  \item[(ii)] Convergence is guaranteed for the general non-smooth quasi-strongly convex case with same rate as for the smooth strongly convex case.
  
  \item[(iii)] The result holds in general Banach spaces.
 \end{itemize}
 
\subsection{Numerical example}
In order to assess the sharpness of the convergence result in \cref{theorem:convergence-strong-convexity}, we present a simple numerical example. Representative as practical lower bound for the 'worst-case' theoretical convergence rate, we consider a problem within the overly favorable class of unconstrained smooth strongly convex quadratic optimization in Euclidean spaces:
\begin{align*}
 \mathrm{min}\left\{ H(\mathbf{x}_1,\mathbf{x}_2) \equiv \left.\frac{1}{2} \begin{bmatrix} \mathbf{x}_1 \\ \mathbf{x}_2 \end{bmatrix}^\top \begin{bmatrix} \mathbf{A} & \mathbf{B}^\top \\ \mathbf{B} & \mathbf{C} \end{bmatrix} \begin{bmatrix} \mathbf{x}_1 \\ \mathbf{x}_2 \end{bmatrix} - \begin{bmatrix} \mathbf{b}_1 \\ \mathbf{b}_2 \end{bmatrix}^\top \begin{bmatrix} \mathbf{x}_1 \\ \mathbf{x}_2 \end{bmatrix} \  \right| \ \mathbf{x}_1 \in \mathbb{R}^n,\ \mathbf{x}_2 \in \mathbb{R}^m
\right\}
\end{align*}
with 
\begin{align*}
 \mathbf{M} := \begin{bmatrix} \mathbf{A} & \mathbf{B}^\top \\ \mathbf{B} & \mathbf{C} \end{bmatrix}
\end{align*}
being symmetric positive definite. This problem clearly satisfies the assumptions of \cref{theorem:convergence-strong-convexity}, as smoothness and strong convexity are satisfied with respect to standard $l_2$ norms for the Euclidean spaces. In particular, it holds
$\beta_1=\beta_2=1$, $L_1=\lambda_\mathrm{max}\left(\mathbf{A}\right)$ and $L_2=\lambda_\mathrm{max}\left( \mathbf{C} \right)$, $\sigma=\lambda_\mathrm{min}(\mathbf{M})$, where $\lambda_\mathrm{max}(\cdot)$ and $\lambda_\mathrm{min}(\cdot)$ respectively denote the maximal and minimal eigenvalues. Ultimately, q-linear convergence is guaranteed such that for all $k\geq 0$ it holds
\begin{align*}
H^{k+1} - H^\star \leq \left(1 - \frac{\lambda_\mathrm{min}(\mathbf{M})}{\lambda_\mathrm{max}(\mathbf{A})} \right)\left(1 - \frac{\lambda_\mathrm{min}(\mathbf{M})}{\lambda_\mathrm{max}(\mathbf{C})} \right) \left(H^{k} - H^\star\right).
\end{align*}

By exploiting the generality of \cref{theorem:convergence-strong-convexity} and choosing problem-dependent norms $\|\cdot\|_1$, $\|\cdot\|_2$, and $\|\cdot\|$, the theoretical convergence rate can be significantly improved. For this, let $\|\cdot\|_1:=\|\cdot \|_{\mathbf{A}}$, $\|\cdot \|_2:=\| \cdot \|_\mathbf{C}$, and $\|\cdot \|:=\| \cdot \|_{\mathbf{M}}$, where for instance $\|\mathbf{x}_1 \|_\mathbf{A}^2:= \mathbf{x}_1^\top \mathbf{A} \mathbf{x}_1$. Finally, $H$ is strongly convex with respect to $\| \cdot \|$ with modulus $\sigma=1$, and the partial block derivatives $\nabla_1 H$ and $\nabla_2 H$ are Lipschitz continuous with Lipschitz constants $L_1=L_2=1$. Furthermore, since $\mathbf{M}$ is symmetric positive definite, so are $\mathbf{A}$ and $\mathbf{C}$, as well as the corresponding Schur complements $\mathbf{S}_\mathbf{A}:=\mathbf{A} - \mathbf{B}^\top \mathbf{C}^{-1} \mathbf{B}$ and $\mathbf{S}_\mathbf{C}:=\mathbf{C} - \mathbf{B} \mathbf{A}^{-1} \mathbf{B}^\top$, and it holds
\begin{alignat*}{3}
 \left\|(\mathbf{x}_1,\mathbf{x}_2) \right\|_\mathbf{M}^2 &&\geq \left\| \mathbf{x}_1 \right\|_{S_\mathbf{A}}^2 &&\geq \lambda_\mathrm{min}\left(\mathbf{A}^{-1} \mathbf{S}_\mathbf{A} \right) \left\| \mathbf{x}_1 \right\|_\mathbf{A}^2,\\
 \left\|(\mathbf{x}_1,\mathbf{x}_2) \right\|_\mathbf{M}^2 &&\geq \left\| \mathbf{x}_2 \right\|_{S_\mathbf{C}}^2 &&\geq \lambda_\mathrm{min}\left(\mathbf{C}^{-1} \mathbf{S}_\mathbf{C} \right) \left\| \mathbf{x}_2 \right\|_\mathbf{C}^2.
\end{alignat*}
Thus, $\beta_1 = \lambda_\mathrm{min}\left(\mathbf{A}^{-1}\mathbf{S}_\mathbf{A}\right)$ and $\beta_2=\lambda_\mathrm{min}\left(\mathbf{C}^{-1}\mathbf{S}_\mathbf{C}\right)$. Thereby, \cref{theorem:convergence-strong-convexity} predicts q-linear convergence such that for all $k\geq 0$ it holds
\begin{align}
\label{numerics:theoretical-rate}
H^{k+1} - H^\star \leq \underbrace{\left(1 - \lambda_\mathrm{min}\left(\mathbf{A}^{-1} \mathbf{S}_\mathbf{A} \right) \right)\left(1 - \lambda_\mathrm{min}\left(\mathbf{C}^{-1} \mathbf{S}_\mathbf{C} \right) \right)}_{=:\eta} \left(H^{k} - H^\star\right).
\end{align}

In the following, we verify the sharpness of the theoretical rate $\eta$, as predicted above, for a small example. Let $n=3$, $m=2$, and define
\begin{align*}
\mathbf{A}:= \begin{bmatrix} 
  5 & -1 & -2\\
 -1 & 6 & -2 \\
 -2 & -2 & 6 
 \end{bmatrix}\!\!,\, 
 \mathbf{B}:=\begin{bmatrix}
                1 & 0.5 & 0.2 \\
                -1 & 2 & 1              
             \end{bmatrix}\!\!,\,
 \mathbf{C}:=\begin{bmatrix}
              2 & 0.4 \\
              0.4 & 1.4
             \end{bmatrix}\!\!,
 \mathbf{b}_1:=\begin{bmatrix} 1\\ 1 \\ 1 \end{bmatrix}\!\!,\,
 \mathbf{b}_2:=\begin{bmatrix} 1\\ 1 \end{bmatrix}\!\!.
\end{align*}
For this choice, the theoretical rate as defined in~\cref{numerics:theoretical-rate} is given by $\eta\approx 0.7222$. The performance of the alternating minimization, cf.\ \cref{algorithm:alternating-minimization}, for the initial guess $\mathbf{x}_1^0:=\left[ 0, 0, 0\right]^\top$ is visualized in \cref{figure:numerics-performance}. In addition, the  theoretically predicted convergence in~\cref{numerics:theoretical-rate} is displayed as well. Ultimately, we observe a good agreement between the theoretical bound and the asymptotic practical convergence rate.

\begin{figure}[!h]
 \centering
 \begin{tikzpicture}
 \begin{semilogyaxis}[
    ylabel={error},
    width = 0.5\textwidth,
    xlabel={iterations [$k$]},
    legend entries={$H^k - H^\star$, $\mathcal{O}\left(\eta^k\right)$},
    legend cell align=left,
    legend columns=1,
    legend style={at={(0.95,0.95)}},
    xmin = 0,
    ymin = 0,
    ymajorgrids=true, major grid style={line width=.2pt, draw=gray!20},
    xmajorgrids=true
]

\addplot [solid, line width=1.25pt, color=grun]   coordinates {
(1,0.2827)
(2,0.0206)
(3,0.0073)
(4,0.0050)
(5,0.0036)
(6,0.0026)
(7,0.0019)
(8,0.0013)
(9,9.6927e-04)
(10,6.9995e-04)
(11,5.0547e-04)
(12,3.6502e-04)
(13,2.6360e-04)
(14,1.9036e-04)
(15,1.3747e-04)
(16,9.9276e-05)
(17,7.1693e-05)
(18,5.1771e-05)
(19,3.7388e-05)
(20,2.7000e-05)
(21,1.9499e-05)
(22,1.4082e-05)
(23,1.0169e-05)
(24,7.3438e-06)
(25,5.3038e-06)
(26,3.8307e-06)
(27,2.7664e-06)
(28,1.9977e-06)
(29,1.4426e-06)
(30,1.0417e-06)
(31,7.5230e-07)
}; 

\addplot [dashed, line width=0.75pt, color=black]   coordinates {
(1,0.005*0.722158700234744710400036638020537793636322021484375)
(31,0.005*0.722158700234744710400036638020537793636322021484375^31)
};

\end{semilogyaxis}
\end{tikzpicture}
 \caption{\label{figure:numerics-performance} Comparison of practical and theoretical convergence for the alternating minimization.}
\end{figure}
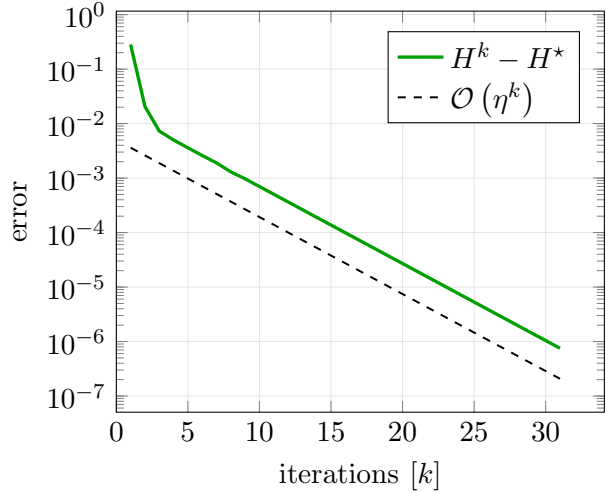

We conclude, that \cref{theorem:convergence-strong-convexity} allows for theoretically predicting sharp bounds of the practical convergence rate of the alternating minimization.

\section{Convex case with quadratic functional growth}\label{section:quadratic}

In this section, linear convergence is established for the alternating minimization applied to the model problem~\eqref{model-problem} with the objective function satisfying a quadratic functional growth property. We stress that opposing to the analysis of feasible descent methods, cf., e.g.,~\cite{Necoara2019}, a feasible descent property is not explicitly required. Such would be, e.g., ensured for block coordinatewise strongly convex objective functions. 

The property of quadratic functional growth reads:
\begin{itemize}

 \item[(P8b)] The objective function $H:\mathcal{B}_1\times\mathcal{B}_2 \rightarrow \mathbb{R}$ has \textit{quadratic functional growth} with modulus $\kappa>0$ with respect to the optimal set $X$, i.e., for all $x=(x_1,x_2)\in \mathrm{dom}\, g_1 \times \mathrm{dom}\, g_2$ and $\bar{x}:=\mathrm{arg\,min}\left\{\|x-y\|\,\big| \, y\in X \right\}$ being the orthogonal projection of $x$ onto $X$, it holds that
 \begin{align*}
  H(x) - H(\bar{x}) \geq \frac{\kappa}{2} \left\| x - \bar{x} \right\|^2.
 \end{align*}
\end{itemize}
\noindent
Quasi-strong convexity implies quadratic functional growth~\cite{Necoara2019}, but not vice versa; functions satisfying (P8b) do not even necessarily require to be convex~\cite{Zhang2015}.

The proof of the next result follows a similar strategy as the proof of \cref{theorem:convergence-strong-convexity}.

\begin{theorem}[Linear convergence under quadratic functional growth]\label{theorem:convergence-quadratic-growth-III}
 Assume that $\mathrm{(P1)}$--$\mathrm{(P7)}$ and $\mathrm{(P8b)}$ are satisfied. Let $\{x^k\}_{k\geq 0}$ be the sequence generated by the alternating minimization, cf.\ \cref{algorithm:alternating-minimization}, and $H^k:=H(x^k)$. Then for all $k\geq 0$ it holds that
 \begin{align*}
  H^{k} - H^\star 
  \leq \left[\left(1 - \frac{\kappa\beta_1}{8 L_1} \right) \left(1 - \frac{\kappa\beta_2}{8 L_2} \right)\right]^k \left( H^{0} - H^\star \right).
 \end{align*}
 In the case of $\mathrm{max}\left\{\tfrac{L_1}{\beta_1},\tfrac{L_2}{\beta_2}\right\}=\infty$, this has to be understood as
\begin{align*}
  H^{k} - H^\star 
  \leq \left(1 - \frac{\kappa}{8 \mathrm{min}\left\{\tfrac{L_1}{\beta_1},\tfrac{L_2}{\beta_2}\right\}} \right)^k \left( H^{0} - H^\star \right).
 \end{align*}
\end{theorem}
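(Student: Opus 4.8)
The plan is to follow the architecture of the proof of \cref{theorem:convergence-strong-convexity}: establish the two one-sided half-step contractions
\begin{align*}
 H^{k+1/2}-H^\star &\le \Big(1-\tfrac{\kappa\beta_1}{8L_1}\Big)\big(H^k-H^\star\big), \\
 H^{k+1}-H^\star &\le \Big(1-\tfrac{\kappa\beta_2}{8L_2}\Big)\big(H^{k+1/2}-H^\star\big),
\end{align*}
multiply them to obtain the one-step rate, and conclude by induction; the degenerate case $\max\{L_1/\beta_1,L_2/\beta_2\}=\infty$ is trivial since the corresponding factor equals $1$. By the symmetry of the two substeps it suffices to prove the first estimate, so I fix $k$ and let $\bar x^k=(\bar x_1^k,\bar x_2^k)$ be the projection of $x^k$ onto $X$, so that $H(\bar x^k)=H^\star$.

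First I would reduce $H^k-H^\star$ to a first-order quantity supported on the first block only. Writing $H^k-H^\star=[f(x^k)-f(\bar x^k)]+[g_1(x_1^k)-g_1(\bar x_1^k)]+[g_2(x_2^k)-g_2(\bar x_2^k)]$, I bound the smooth difference by convexity of $f$,
\begin{align*}
 f(x^k)-f(\bar x^k)\le \langle \nabla_1 f(x^k),x_1^k-\bar x_1^k\rangle+\langle \nabla_2 f(x^k),x_2^k-\bar x_2^k\rangle,
\end{align*}
and then cancel the second-block terms using the optimality condition \eqref{result:optimality-conditions-2} of \cref{lemma:optimality-conditions} with $x_2=\bar x_2^k$, i.e.\ $g_2(x_2^k)-g_2(\bar x_2^k)\le -\langle \nabla_2 f(x^k),x_2^k-\bar x_2^k\rangle$. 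This leaves
\begin{align*}
 H^k-H^\star \le \langle \nabla_1 f(x^k),x_1^k-\bar x_1^k\rangle+g_1(x_1^k)-g_1(\bar x_1^k)=:D.
\end{align*}

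Next I would lower-bound the half-step decrease. Since $x_1^{k+1}$ is an exact minimizer of $H(\cdot,x_2^k)$, cf.\ \eqref{alternating-minimization-step-1}, for every $t\in(0,1]$ the convex combination $x_1^k+t(\bar x_1^k-x_1^k)$ is an admissible competitor, and combining the descent lemma \eqref{model-property-lipschitz-f-1} for $f$ with convexity of $g_1$ yields
\begin{align*}
 H^{k+1/2}\le H^k-tD+\tfrac{L_1 t^2}{2}\,\|x_1^k-\bar x_1^k\|_1^2.
\end{align*}
I then convert the growth estimate into a block bound: (P2) gives $\|x_1^k-\bar x_1^k\|_1^2\le \beta_1^{-1}\|x^k-\bar x^k\|^2$ and (P8b) gives $\|x^k-\bar x^k\|^2\le 2\kappa^{-1}(H^k-H^\star)$, so that $\|x_1^k-\bar x_1^k\|_1^2\le 2(\kappa\beta_1)^{-1}(H^k-H^\star)$. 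Using $D\ge H^k-H^\star\ge 0$ this collapses to
\begin{align*}
 H^k-H^{k+1/2}\ge \Big(t-\tfrac{L_1}{\kappa\beta_1}\,t^2\Big)\big(H^k-H^\star\big),
\end{align*}
and optimizing the concave quadratic in $t$ over the admissible range $(0,1]$ delivers a contraction factor of the claimed form. The unconstrained optimum $t=\kappa\beta_1/(2L_1)$ already produces the factor $1-\kappa\beta_1/(4L_1)$, which lies within the stated bound, so the factor $1/8$ leaves ample room to absorb the constraint $t\le 1$.

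The main obstacle is precisely the step that quasi-strong convexity trivialized in \cref{theorem:convergence-strong-convexity}. There, (P8a) hands over the negative curvature term $-\tfrac{\sigma}{2}\|x^k-\bar x^k\|^2$ directly in a gradient inequality, which matches the descent lemma under the single fixed rescaling $\sigma\beta_1/L_1$; here we have no such lower bound on $f$, only a growth bound on $H$ that controls the \emph{full} distance $\|x^k-\bar x^k\|$, which must be routed to the block through $\beta_1$, while the descent itself must be synthesized through a free step length $t$. Carrying out the resulting constrained optimization over $t\in(0,1]$ — where the cap $t\le 1$ is exactly what the convexity argument for $g_1$ demands — is the delicate point and is what widens the constant relative to the quasi-strongly convex rate. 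Once the first half-step estimate is secured, the second half-step follows by the symmetric argument (now invoking \eqref{result:optimality-conditions-1} and the projection of $x^{k+1/2}$), and the product-plus-induction step is routine.
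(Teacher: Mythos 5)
Your proposal follows the same route as the paper's proof, step for step: convexity of $f$, cancellation of the second-block terms via \eqref{result:optimality-conditions-2}, the convexity bound for $g_1$, routing the quadratic growth to the first block through $\beta_1$, comparison with the convex-combination competitor admissible by \eqref{alternating-minimization-step-1}, and a final optimization over a step length in $(0,1]$ (called $\gamma$ in the paper), followed by symmetry, multiplication and induction. The only technical difference is how smoothness enters: the paper splits $\llangle \nabla_1 f(x^k), x_1^k-\bar x_1^k\rrangle$ by adding and subtracting $\nabla_1 f$ at the perturbed point and invoking \eqref{model-property-lipschitz-f-1a}, which yields the penalty $L_1\gamma\|x_1^k-\bar x_1^k\|_1^2$, whereas you insert the competitor directly into the descent lemma \eqref{model-property-lipschitz-f-1}, which yields the smaller penalty $\tfrac{L_1t^2}{2}\|x_1^k-\bar x_1^k\|_1^2$. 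Your variant is in fact slightly sharper: your decrease function is $t-\tfrac{L_1}{\kappa\beta_1}t^2$ with unconstrained maximum $\tfrac{\kappa\beta_1}{4L_1}$, versus the paper's $\gamma-\tfrac{2L_1}{\kappa\beta_1}\gamma^2$ with maximum $\tfrac{\kappa\beta_1}{8L_1}$, so where the interior optimizer is admissible you get the factor $1-\tfrac{\kappa\beta_1}{4L_1}$ rather than $1-\tfrac{\kappa\beta_1}{8L_1}$.

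One caveat, which affects your argument and the paper's equally: the regime where the cap $t\le 1$ binds is not actually disposed of by the phrase ``ample room.'' With $t=1$ your bound gives the factor $\tfrac{L_1}{\kappa\beta_1}$, and the inequality $\tfrac{L_1}{\kappa\beta_1}\le 1-\tfrac{\kappa\beta_1}{8L_1}$ holds only for $\tfrac{\kappa\beta_1}{L_1}\le 4+2\sqrt{2}$; it must fail once $\kappa\beta_1>8L_1$, where the claimed factor is negative. The paper has the identical blind spot: its choice $\gamma=\tfrac{\kappa\beta_1}{4L_1}$ is admissible only if $\kappa\beta_1\le 4L_1$, and unlike in \cref{theorem:convergence-strong-convexity}, where $\tfrac{\sigma\beta_1}{L_1}\in(0,1]$ is justified from (P8a) and the descent lemma, no bound on $\tfrac{\kappa\beta_1}{L_1}$ is established under (P8b). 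So this is not a gap relative to the paper — your range of validity is even slightly larger than the paper's — but be aware that neither argument, as written, covers all parameter values permitted by the hypotheses.
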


\begin{proof}
 We consider the first half-step of the alternating minimization and show
\begin{align}
\label{proof:quadratic-growth-III-result-1}
 H^{k+1/2} - H^\star \leq \left(1 - \frac{\kappa\beta_1}{8 L_1} \right) \left( H^{k} - H^\star\right).
\end{align}
Without loss of generality, we assume that $\tfrac{L_1}{\beta_1}<\infty$; note that~\cref{proof:quadratic-growth-III-result-1} holds immediately for $\tfrac{L_1}{\beta_1}=\infty$. In order to prove~\cref{proof:quadratic-growth-III-result-1}, we first utilize the convexity of $f$ yielding
\begin{align}
\label{proof:quadratic-growth-III:aux-1}
f(x^k) - f(\bar{x}^k)
  &\leq 
  \llangle \nabla_1 f(x_1^k, x_2^k), x_1^k - \bar{x}_1^k \rrangle +
  \llangle \nabla_2 f(x_1^k, x_2^k), x_2^k - \bar{x}_2^k \rrangle,
\end{align}
where $\bar{x}^k=(\bar{x}_1^k,\bar{x}_2^k):= \mathrm{argmin}\left\{ \|x-x^k\| \,\big| \, x\in X \right\} \in \mathrm{dom}\, g_1 \times \mathrm{dom}\, g_2$ such that $H^\star = H(\bar{x}^k)$. For $\gamma\in(0,1]$ to be specified later, using the Lipschitz continuity of $\nabla_1 f$, cf.~\cref{model-property-lipschitz-f-1a}, the convexity of $f$, and the definition of $\beta_1$, cf.~\cref{definition-beta}, we obtain
\begin{align}
\label{proof:quadratic-growth-III:aux-1b}
  &\llangle \nabla_1 f(x_1^k, x_2^k), x_1^k - \bar{x}_1^k \rrangle \\
  \nonumber
  &\quad=
   \llangle \nabla_1 f(x_1^k, x_2^k) - \nabla_1 f\left(x_1^k + \gamma \left(\bar{x}_1^k - x_1^k\right),x_2^k\right), x_1^k - \bar{x}_1^k \rrangle \\
   \nonumber
   &\qquad\qquad+ \frac{1}{\gamma}\llangle \nabla_1 f\left(x_1^k + \gamma \left(\bar{x}_1^k - x_1^k\right),x_2^k\right), \gamma \left(x_1^k - \bar{x}_1^k\right) \rrangle \\
   \nonumber
  &\quad\leq 
   L_1 \gamma \| x_1^k - \bar{x}_1^k \|_1^2 + \frac{1}{\gamma} \left[ f\left(x_1^k,x_2^k\right) - f\left(x_1^k + \gamma \left(\bar{x}_1^k - x_1^k\right), x_2^k \right) \right]\\
      \nonumber
  &\quad\leq 
   \frac{L_1}{\beta_1} \gamma \| x^k - \bar{x}^k \|^2 + \frac{1}{\gamma} \left[ f\left(x_1^k,x_2^k\right) - f\left(x_1^k + \gamma \left(\bar{x}_1^k - x_1^k\right), x_2^k \right) \right].
\end{align}
By convexity of $g_1$, it holds that
\begin{align}
\label{proof:quadratic-growth-III:aux-2}
 g_1(x_1^k) - g_1(\bar{x}^k_1) \leq \frac{1}{\gamma} \left[ g_1(x_k) - g_1\left(x_1^k + \gamma\left( \bar{x}_1^k - x_1^k\right) \right)\right].
\end{align}
By \cref{lemma:optimality-conditions}, it holds that
\begin{align}
\label{proof:quadratic-growth-III:aux-3}
g_2(x_2^k) - g_2(\bar{x}_2^k) \leq - \llangle \nabla_2 f(x^k), x_2^k - \bar{x}_2^k \rrangle.
\end{align}
By combining~\cref{proof:quadratic-growth-III:aux-1}, \cref{proof:quadratic-growth-III:aux-1b}, \cref{proof:quadratic-growth-III:aux-2}, \cref{proof:quadratic-growth-III:aux-3}, we obtain for the objective function
\begin{align}
\label{proof:quadratic-growth-III:aux-4}
 H^k - H^\star
 &= 
 f(x^k) - f(\bar{x}^k) + g_1(x_1^k) - g_1(\bar{x}_1^k) + g_2(x_2^k) - g_2(\bar{x}_2^k) \\
 \nonumber
 &\leq 
  \frac{L_1}{\beta_1} \gamma \| x^k - \bar{x}^k \|^2 
  + \frac{1}{\gamma} \left[ H(x_1^k,x_2^k) - H\left(x_1^k + \gamma \left( \bar{x}_1^k - x_1^k \right), x_2^k \right) \right].
\end{align}
Thus, by utilizing the quadratic growth of $H$ and the optimality property of $x_1^{k+1}$ based on the first step of the alternating minimization, cf.~\cref{alternating-minimization-step-1}, it follows for all $\gamma\in(0,1]$ that
\begin{align*}
 H^k - H^\star
 \leq 
  \frac{2 L_1 \gamma}{\kappa\beta_1} \left(  H^k - H^\star \right)
  +
  \frac{1}{\gamma} \left( H^k - H^{k+1/2} \right).
\end{align*}
By (optimally) choosing $\gamma=\frac{\kappa\beta_1}{4L_1}$, we obtain
\begin{align*}
 H^k - H^\star
 \leq 
  \frac{8 L_1}{\kappa\beta_1} \left( H^k - H^{k+1/2} \right),
\end{align*}
which finally yields~\cref{proof:quadratic-growth-III-result-1}, after reordering terms.

By symmetry, it analogously follows that
\begin{align}
\label{proof:quadratic-growth-III-result-2}
 H^{k+1} - H^\star \leq \left(1 - \frac{\kappa\beta_2}{8 L_2} \right) \left( H^{k+1/2} - H^\star\right).
\end{align}
Hence, by combining~\cref{proof:quadratic-growth-III-result-1} and~\cref{proof:quadratic-growth-III-result-2}, we obtain
 \begin{align*}
  H^{k+1} - H^\star 
  \leq \left(1 - \frac{\kappa\beta_1}{8 L_1} \right) \left(1 - \frac{\kappa\beta_2}{8 L_2} \right) \left( H^{k} - H^\star \right),
 \end{align*}
and the assertion follows by induction.
\end{proof}

\section{Plain convex case}\label{section:convex}

In this section, sublinear convergence is established for the alternating minimization applied to model problem~\eqref{model-problem} under no additional convexity or growth assumptions, besides plain convexity. A similar setting has been considered by Beck~\cite{Beck2015}. Here, we extend the result in the aforementioned work to Banach spaces, without the use of proximal mappings. As in~\cite{Beck2015}, we however assume a compact level set with respect to the initial value: 

\begin{itemize}
 \item[(P8c)] The functions $g_1:\mathcal{B}_1 \rightarrow \mathbb{R} \cup \{ \infty\}$, $g_2: \mathcal{B}_2 \rightarrow \mathbb{R} \cup \{ \infty \}$ are closed convex (and thereby also $H$ is closed convex). Furthermore, assume that the level set of $H$ with respect to the initial guess
 \begin{align*}
  S(x_0):= \left\{ x \in \mathrm{dom}\, g_1 \times \mathrm{dom}\, g_2 \, \big| \, H(x) \leq H(x^0) \right\}
 \end{align*}
 is compact, and we denote by $R$ the following diameter
 \begin{align*}
  R:=R(x^0):= \mathrm{sup}\left\{ \|x - x^\star \| \, \big| \, x\in S(x^0),\ x^\star \in X \right\}.
 \end{align*}
 By monotonicity of $\{H(x^k)\}_{k=0,\frac{1}{2},1,...}$, it in particular holds
 \begin{align*}
  \| x^k - \bar{x}^k \|, \| x^{k+1/2} - \bar{x}^{k+1/2} \| \leq R,\quad\text{for every }k\geq 0.
 \end{align*}
\end{itemize}

The following convergence result predicts a two-stage behavior: first, the error decreases q-linearly until sufficiently small; after that, sublinear convergence is initiated. The shift is essentially depending on the smoothness properties of the objective function.

\begin{theorem}[Sublinear convergence for the non-smooth convex case]\label{theorem:convergence-convex-nonsmooth}
 Assume that $\mathrm{(P1)}$--$\mathrm{(P7)}$ and $\mathrm{(P8c)}$ are satisfied. Let $\{x^k\}_{k\geq 0}$ be the sequence generated by the alternating minimization, cf.\ \cref{algorithm:alternating-minimization}, and $H^k:=H(x^k)$. Define 
 \begin{align*}
 m^\star :=\left[-1 + \left\lceil\mathrm{log}_2 \left( \frac{H^0 - H^\star}{\mathrm{min}\left\{\frac{L_1}{\beta_1},\frac{L_2}{\beta_2}\right\} R^2} \right)\right\rceil \right]_+,\qquad
 p^\star:= \frac{2\left( \frac{\beta_1}{L_1} + \frac{\beta_2}{L_2} \right)^{-1}}{\mathrm{min}\left\{\frac{L_1}{\beta_1},\frac{L_2}{\beta_2}\right\}}\in[1,2],
 \end{align*}
  where $\lceil\cdot\rceil$ denotes the ceiling function, and $[\cdot]_+$ denotes the positive part of values in $\mathbb{R}$, i.e., $[x]_+:=\mathrm{max}\{x,0\}$.
 Then it holds for all $k\geq 0$
 \begin{align*}
  H^k - H^\star \leq \mathrm{max}\left\{ \left(\frac{1}{2} \right)^k \left(H^0 - H^\star\right), \ \frac{4 R^2  \left( \frac{\beta_1}{L_1} + \frac{\beta_2}{L_2} \right)^{-1}}{[k - m^\star]_+ + p^\star} \right\}.
 \end{align*}
 In particular, for all $k \geq m^\star$, it holds that
 \begin{align*}
  H^k - H^\star \leq \frac{4 R^2  \left( \frac{\beta_1}{L_1} + \frac{\beta_2}{L_2} \right)^{-1}}{k - m^\star + p^\star}.
 \end{align*}
\end{theorem}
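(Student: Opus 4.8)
The plan is to mirror the half-step analysis of \cref{theorem:convergence-quadratic-growth-III}, replacing the quadratic-growth bound by the diameter bound from (P8c), and then to run a two-phase (first geometric, then sublinear) telescoping argument. First I would reproduce the chain leading to~\cref{proof:quadratic-growth-III:aux-4}: using the convexity of $f$, the Lipschitz estimate~\cref{model-property-lipschitz-f-1a}, the definition of $\beta_1$ in~\cref{definition-beta}, the convexity of $g_1$, and \cref{lemma:optimality-conditions}, one obtains for every $\gamma\in(0,1]$ the intermediate inequality
\begin{align*}
 H^k - H^\star \leq \frac{L_1}{\beta_1}\gamma \left\| x^k - \bar{x}^k \right\|^2 + \frac{1}{\gamma}\left[ H(x^k) - H\left(x_1^k + \gamma\left(\bar{x}_1^k - x_1^k\right), x_2^k\right)\right].
\end{align*}
At this point, instead of invoking quadratic growth, I would insert the bound $\| x^k - \bar{x}^k \|^2 \leq R^2$ from (P8c) together with the optimality property of $x_1^{k+1}$, which yields the affine-in-$\gamma$ recursion
\begin{align}
\label{plan:half-step}
 H^{k+1/2} - H^\star \leq (1-\gamma)\left(H^k - H^\star\right) + \frac{L_1}{\beta_1}\gamma^2 R^2, \qquad \gamma\in(0,1],
\end{align}
together with its symmetric counterpart for the second half-step (with $\beta_1/L_1$ replaced by $\beta_2/L_2$, and $H^k, H^{k+1/2}$ by $H^{k+1/2}, H^{k+1}$).

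Next I would optimize the right-hand side of~\cref{plan:half-step} over $\gamma\in(0,1]$. Abbreviating $a_i:=\beta_i/L_i$ and $\Delta^k := H^k - H^\star$, the unconstrained minimizer is $\gamma^\star = a_1\Delta^k/(2R^2)$; distinguishing the cases $\gamma^\star\leq 1$ and $\gamma^\star>1$ gives the unified per-half-step estimate
\begin{align*}
 \Delta^{k+1/2} \leq \max\left\{ \tfrac{1}{2}\Delta^k,\ \Delta^k - \tfrac{a_1}{4R^2}\left(\Delta^k\right)^2\right\},
\end{align*}
where the halving term is active precisely in the large-error regime $\Delta^k > 2R^2/a_1$ and the quadratic-decrement term otherwise. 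The crucial observation is that in the small-error regime $\Delta^k - \Delta^{k+1/2} \geq \tfrac{a_1}{4R^2}(\Delta^k)^2 \geq \tfrac{a_1}{4R^2}\Delta^k\Delta^{k+1/2}$ by the monotonicity principle, so that dividing by $\Delta^k\Delta^{k+1/2}$ telescopes as $1/\Delta^{k+1/2} \geq 1/\Delta^k + a_1/(4R^2)$. Applying this to both half-steps and summing then gives, once the error has dropped below the threshold $2\min\{L_1/\beta_1,L_2/\beta_2\}R^2$ (which is exactly when \emph{both} half-steps are in their quadratic regimes),
\begin{align*}
 \frac{1}{\Delta^{k+1}} \geq \frac{1}{\Delta^k} + \frac{a_1+a_2}{4R^2},
\end{align*}
the reciprocal increment that produces the constant $4R^2\left(\tfrac{\beta_1}{L_1} + \tfrac{\beta_2}{L_2}\right)^{-1}$ in the statement.

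Finally I would glue the two phases together, and here the key simplifying identity is that the value of the sublinear bound at the transition, $4R^2(\tfrac{\beta_1}{L_1}+\tfrac{\beta_2}{L_2})^{-1}/p^\star$, equals exactly the regime threshold $2\min\{L_1/\beta_1,L_2/\beta_2\}R^2$. Using this, I would prove by induction the clean statement $\Delta^k \leq \max\{2^{-k}\Delta^0,\, 2\min\{L_1/\beta_1,L_2/\beta_2\}R^2\}$ for all $k$: in the large-error regime each full step at least halves $\Delta^k$, since the half-step whose threshold equals $2\min\{L_1/\beta_1,L_2/\beta_2\}R^2$ (the one with the larger $a_i$) is guaranteed to contract by $1/2$ while the other is non-increasing, so the halving propagates regardless of which half-step comes first. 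A direct computation with the ceiling and positive part shows $2^{-m^\star}\Delta^0 \leq 2\min\{L_1/\beta_1,L_2/\beta_2\}R^2$, hence $\Delta^{m^\star}\leq 2\min\{L_1/\beta_1,L_2/\beta_2\}R^2$ and, by monotonicity, $\Delta^k$ stays below the threshold for all $k\geq m^\star$. Telescoping the reciprocal increment from $k=m^\star$ onward, with the initial reciprocal contributing precisely the offset $p^\star$, yields the sublinear bound for $k\geq m^\star$; combining it with the geometric bound via the maximum gives the stated estimate. The main obstacle I anticipate is this phase-transition bookkeeping — verifying that the halving survives both half-steps irrespective of their order, that the threshold is reached by exactly $m^\star$, and that the reciprocal telescoping is initialized so that the offset $p^\star\in[1,2]$ emerges with the correct constant; the two half-step recursions and the reciprocal-telescoping step themselves are routine.
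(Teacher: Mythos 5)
Your proposal is correct and takes essentially the same route as the paper: the per-half-step estimates you obtain by optimizing over $\gamma$ are exactly \cref{lemma:general-descent-am}, your reciprocal-increment telescoping $1/\Delta^{k+1}\geq 1/\Delta^k + (\gamma_1+\gamma_2)$ is \cref{lemma:technical-lemma-2}, and your two-phase gluing hinges on the same identity $4R^2\left(\tfrac{\beta_1}{L_1}+\tfrac{\beta_2}{L_2}\right)^{-1}\!\!/p^\star = 2\,\mathrm{min}\left\{\tfrac{L_1}{\beta_1},\tfrac{L_2}{\beta_2}\right\}R^2$ that the paper uses to verify assumption~\eqref{result:auxiliary-lemma-II-3}. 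The only differences are cosmetic: you telescope from $m^\star$ directly (via the induction $\Delta^k\leq\mathrm{max}\{2^{-k}\Delta^0,\,2\,\mathrm{min}\{L_1/\beta_1,L_2/\beta_2\}R^2\}$), whereas the paper telescopes from the minimal transition index $m$ and then relaxes using $m\leq m^\star$.
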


The proof of \cref{theorem:convergence-convex-nonsmooth} utilizes two auxiliary results: general descent properties for each subiteration of the alternating minimization, and a criterion for concluding sublinear convergence. Those are summarized in the following two lemmas. 

\begin{lemma}\label{lemma:general-descent-am}
 Under the same assumption as in \cref{theorem:convergence-convex-nonsmooth}, it holds for all $k\geq 0$
 \begin{align}
 \label{result:convergence-convex-nonsmooth-descent-lemma-1}
  H^k - H^{k+1/2} \geq \left\{ \begin{tabular}{ll}
                                $\frac{1}{2} \left( H^k - H^\star \right)$ &if $H^k - H^\star > \frac{2L_1R^2}{\beta_1}$\\
                                $\frac{\beta_1}{4 L_1 R^2} \left( H^k - H^\star \right)^2$ &\text{else},
                               \end{tabular}
\right.
 \end{align}
 and
 \begin{align}
 \label{result:convergence-convex-nonsmooth-descent-lemma-2}
  H^{k+1/2} - H^{k+1} \geq \left\{ \begin{tabular}{ll}
                                $\frac{1}{2} \left( H^{k+1/2} - H^\star \right)$ &if $H^{k+1/2} - H^\star > \frac{2L_2 R^2}{\beta_2}$\\
                                $\frac{\beta_2}{4 L_2 R^2} \left( H^{k+1/2} - H^\star \right)^2$ &\text{else}.
                               \end{tabular}
\right.
 \end{align}
\end{lemma}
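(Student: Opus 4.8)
The plan is to prove each of the two descent inequalities separately by an argument essentially paralleling the key computation inside the proof of \cref{theorem:convergence-quadratic-growth-III}, but now replacing the quadratic functional growth bound with the crude diameter bound $\|x^k - \bar{x}^k\| \leq R$ afforded by (P8c). I will establish \cref{result:convergence-convex-nonsmooth-descent-lemma-1} in detail; \cref{result:convergence-convex-nonsmooth-descent-lemma-2} then follows by the symmetric argument applied to the second half-step, exactly as symmetry was invoked in the earlier proofs.

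First I would reproduce the chain of estimates through~\cref{proof:quadratic-growth-III:aux-4}: by convexity of $f$, Lipschitz continuity of $\nabla_1 f$ (cf.~\cref{model-property-lipschitz-f-1a}), convexity of $g_1$, and the optimality condition~\cref{result:optimality-conditions-2} from \cref{lemma:optimality-conditions}, I obtain for every $\gamma \in (0,1]$
\begin{align*}
 H^k - H^\star
 \leq
 \frac{L_1}{\beta_1} \gamma \, \| x^k - \bar{x}^k \|^2
 + \frac{1}{\gamma} \left[ H(x_1^k,x_2^k) - H\left(x_1^k + \gamma\left(\bar{x}_1^k - x_1^k\right), x_2^k\right)\right].
\end{align*}
At this point, instead of the growth property I would insert the diameter bound $\|x^k - \bar{x}^k\|^2 \leq R^2$ from (P8c), and bound the bracketed difference from above by $H^k - H^{k+1/2}$ using the optimality of $x_1^{k+1}$ (cf.~\cref{alternating-minimization-step-1}, giving $H(x_1^k + \gamma(\cdots), x_2^k) \geq H^{k+1/2}$). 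This yields, for all $\gamma \in (0,1]$,
\begin{align*}
 H^k - H^\star \leq \frac{L_1 R^2}{\beta_1}\, \gamma + \frac{1}{\gamma}\left(H^k - H^{k+1/2}\right).
\end{align*}

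The crux is then the choice of $\gamma$, which must respect the constraint $\gamma \in (0,1]$ and hence naturally produces the two-case structure. The unconstrained optimal choice balancing the two terms is $\gamma = \sqrt{\frac{\beta_1 (H^k - H^{k+1/2})}{L_1 R^2}}$; substituting it gives $H^k - H^\star \leq 2\sqrt{\frac{L_1 R^2}{\beta_1}(H^k - H^{k+1/2})}$, which after squaring and reordering yields the quadratic ("else") branch $H^k - H^{k+1/2} \geq \frac{\beta_1}{4 L_1 R^2}(H^k - H^\star)^2$. This optimal $\gamma$ is admissible precisely when it does not exceed $1$; when it would exceed $1$, I instead set $\gamma = 1$, obtaining $H^k - H^\star \leq \frac{L_1 R^2}{\beta_1} + (H^k - H^{k+1/2})$, which reorders to the linear branch. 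The main obstacle is purely bookkeeping: I must verify that the threshold $H^k - H^\star > \frac{2 L_1 R^2}{\beta_1}$ separating the two cases coincides exactly with the condition under which the balancing $\gamma$ leaves the admissible interval $(0,1]$. Tracking through the balance equation, the optimal $\gamma$ equals $1$ when $H^k - H^{k+1/2} = \frac{L_1 R^2}{\beta_1}$, and the monotonicity $H^k \geq H^{k+1/2} \geq H^\star$ lets me convert this into the stated threshold on $H^k - H^\star$, so the case split is consistent and no further regularity is needed. Finally, \cref{result:convergence-convex-nonsmooth-descent-lemma-2} is obtained verbatim by interchanging the roles of the two blocks, using~\cref{model-property-lipschitz-f-2a}, the convexity of $g_2$, and~\cref{result:optimality-conditions-1} in place of their first-block counterparts, together with the diameter bound $\|x^{k+1/2} - \bar{x}^{k+1/2}\| \leq R$.
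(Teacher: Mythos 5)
Your proposal follows the paper's own strategy almost verbatim: both proofs rerun the argument of \cref{theorem:convergence-quadratic-growth-III} up to \cref{proof:quadratic-growth-III:aux-4}, insert the diameter bound $\|x^k-\bar{x}^k\|\leq R$ from (P8c) together with the optimality of $x_1^{k+1}$ to get
$H^k - H^\star \leq \tfrac{L_1R^2}{\beta_1}\,\gamma + \tfrac{1}{\gamma}\left(H^k - H^{k+1/2}\right)$ for all $\gamma\in(0,1]$,
and then conclude by a suitable choice of $\gamma$, with \cref{result:convergence-convex-nonsmooth-descent-lemma-2} obtained by symmetry. The only deviation is the choice of $\gamma$: the paper splits cases on the size of $H^k-H^\star$ (taking $\gamma=1$ if $H^k-H^\star>\tfrac{2L_1R^2}{\beta_1}$, and $\gamma=\tfrac{\beta_1}{2L_1R^2}(H^k-H^\star)$ otherwise), so that the case distinction of the lemma is matched by construction, whereas you balance the two terms, which makes your case split depend on the descent $D:=H^k-H^{k+1/2}$ rather than on the error.

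This creates the one flaw in your bookkeeping: the claim that the threshold $H^k-H^\star>\tfrac{2L_1R^2}{\beta_1}$ ``coincides exactly'' with the balancing $\gamma$ leaving $(0,1]$ is false. The balancing $\gamma$ exceeds $1$ precisely when $D>\tfrac{L_1R^2}{\beta_1}$, and monotonicity only gives $D\leq H^k-H^\star$, which is one-sided: for instance, $H^k-H^\star\leq\tfrac{2L_1R^2}{\beta_1}$ is perfectly compatible with $D>\tfrac{L_1R^2}{\beta_1}$, so the two case splits genuinely differ. The repair is elementary, however. If $H^k-H^\star\leq\tfrac{2L_1R^2}{\beta_1}$ but you land in your $\gamma=1$ case, the linear bound you derived implies the quadratic one, since then $\tfrac{\beta_1}{4L_1R^2}(H^k-H^\star)^2=\tfrac12(H^k-H^\star)\cdot\tfrac{\beta_1(H^k-H^\star)}{2L_1R^2}\leq\tfrac12(H^k-H^\star)\leq D$; conversely, if $H^k-H^\star>\tfrac{2L_1R^2}{\beta_1}$ but the balancing $\gamma$ is admissible, the quadratic bound implies the linear one by the same inequality read in reverse. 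With these two lines added (and the trivial observation that $D=0$ forces $H^k=H^\star$, so the degenerate balancing case is harmless), your proof is complete and yields exactly the paper's result.
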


\begin{proof}
 We consider the first half step of the alternating minimization, assuming, without loss of generality, that $\tfrac{L_1}{\beta_1}<\infty$; otherwise~\cref{result:convergence-convex-nonsmooth-descent-lemma-1} follows immediately. Following the proof of \cref{theorem:convergence-quadratic-growth-III},~\cref{proof:quadratic-growth-III:aux-4} also holds under the stated assumptions of this lemma. We recall~\cref{proof:quadratic-growth-III:aux-4}: it holds for all $\gamma\in(0,1]$ that
\begin{align*}
 H^k - H^\star
 &\leq 
  \frac{L_1}{\beta_1} \gamma \| x^k - \bar{x}^k \|^2 
  + \frac{1}{\gamma} \left[ H(x_1^k,x_2^k) - H\left(x_1^k + \gamma \left( \bar{x}_1^k - x_1^k \right), x_2^k \right) \right].
\end{align*}
Thus, by the definition of $R$ and $x^{k+1/2}$, cf.~\cref{alternating-minimization-step-1}, it follows
\begin{align*}
 H^k - H^\star \leq
 \frac{L_1R^2}{\beta_1} \gamma + \frac{1}{\gamma} \left( H^k - H^{k+1/2} \right),
\end{align*}
In the following, we distinguish the two cases: (i) $H^k - H^\star > \frac{2 L_1 R^2}{\beta_1}$; and (ii) $H^k - H^\star \leq \frac{2 L_1 R^2}{\beta_1}$. In the first case, we choose $\gamma=1$; in the second case, we choose $\gamma=\frac{\beta_1}{2L_1 R^2}(H^k - H^\star)$. This results in~\cref{result:convergence-convex-nonsmooth-descent-lemma-1}. The result~\cref{result:convergence-convex-nonsmooth-descent-lemma-2} analogously follows by symmetry.
\end{proof}

The following auxiliary lemma is inspired by~\cite{Beck2013}. Opposing to the aforementioned work, the subsequent results allows for effectively making use of the energy descent of both substeps of the alternating minimization instead of just one.

\begin{lemma}\label{lemma:technical-lemma-2}
 Let $\{A_k\}_{k=0,\frac{1}{2},1,\frac{3}{2},...} \subset \mathbb{R}_{>0}$, $\gamma_1,\gamma_2\geq 0$, and $p\geq 0$ satisfying for all $k\geq 0$
 \begin{align}
 \label{result:auxiliary-lemma-II-1}
  A_{k} - A_{k+1/2} &\geq \gamma_1 A_{k}^2,\\
  \label{result:auxiliary-lemma-II-2}
  A_{k+1/2} - A_{k+1} &\geq \gamma_1 A_{k+1/2}^2,
 \end{align}
 and
 \begin{align}
 \label{result:auxiliary-lemma-II-3}
  A_0 \leq \frac{1}{p (\gamma_1 + \gamma_2)}.
 \end{align}
 Then it holds for all $k\geq 0$
 \begin{align*}
  A_{k} \leq \frac{1}{(k+p) (\gamma_1 + \gamma_2)}.
 \end{align*}
\end{lemma}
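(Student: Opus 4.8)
The plan is to pass to the reciprocal sequence and telescope. Since $\{A_k\}\subset\mathbb{R}_{>0}$, I define $B_k := 1/A_k$ for $k=0,\tfrac12,1,\dots$; the asserted bound is then equivalent to the lower bound $B_k \geq (k+p)(\gamma_1+\gamma_2)$. First I would record the monotonicity built into the hypotheses: since $\gamma_1 A_k^2 \geq 0$ and $\gamma_2 A_{k+1/2}^2 \geq 0$, inequalities \eqref{result:auxiliary-lemma-II-1}--\eqref{result:auxiliary-lemma-II-2} give $A_{k+1/2}\leq A_k$ and $A_{k+1}\leq A_{k+1/2}$, so the sequence is nonincreasing along half-steps.

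The key step converts each quadratic energy descent into an additive gain for the reciprocal. Dividing \eqref{result:auxiliary-lemma-II-1} by the positive quantity $A_k A_{k+1/2}$ yields
\[
 B_{k+1/2} - B_k \;=\; \frac{A_k - A_{k+1/2}}{A_k A_{k+1/2}} \;\geq\; \gamma_1\,\frac{A_k}{A_{k+1/2}} \;\geq\; \gamma_1,
\]
where the last inequality uses the monotonicity $A_k \geq A_{k+1/2}$, so that $A_k/A_{k+1/2}\geq 1$. Treating the second half-step analogously (dividing \eqref{result:auxiliary-lemma-II-2} by $A_{k+1/2}A_{k+1}$ and using $A_{k+1/2}\geq A_{k+1}$) gives $B_{k+1}-B_{k+1/2}\geq \gamma_2$, and adding the two yields the clean per-iteration bound
\[
 B_{k+1} - B_k \;\geq\; \gamma_1 + \gamma_2 \qquad \text{for all } k\geq 0.
\]

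With this in hand the conclusion follows by telescoping: summing over the first $k$ full steps gives $B_k \geq B_0 + k(\gamma_1+\gamma_2)$, and the initialization hypothesis \eqref{result:auxiliary-lemma-II-3} reads $B_0 = 1/A_0 \geq p(\gamma_1+\gamma_2)$, whence $B_k \geq (k+p)(\gamma_1+\gamma_2)$; inverting gives exactly the stated estimate. The degenerate case $\gamma_1+\gamma_2=0$ renders both hypotheses and the conclusion vacuous, so I may assume $\gamma_1+\gamma_2>0$ before inverting.

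I expect the only genuinely substantive point to be the monotonicity upgrade $A_k/A_{k+1/2}\geq 1$ in the first display: this is precisely what lets each of the two substeps contribute its full constant $\gamma_i$ to the reciprocal growth, rather than only one of them as in the single-step argument of \cite{Beck2013}. Everything else is the standard reciprocal-telescoping device for sublinear rates. I would also note that, for the conclusion to feature $\gamma_1+\gamma_2$ rather than $2\gamma_1$, the second descent hypothesis \eqref{result:auxiliary-lemma-II-2} must carry the constant $\gamma_2$ in place of $\gamma_1$, and I read it accordingly.
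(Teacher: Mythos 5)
Your proof is correct and follows essentially the same route as the paper's own argument: divide each quadratic descent inequality by the product of consecutive terms, use monotonicity to bound the ratios $A_k/A_{k+1/2}$ and $A_{k+1/2}/A_{k+1}$ below by $1$ so that each half-step contributes its full constant, then telescope the reciprocals and invoke the initialization bound on $1/A_0$. You also correctly read the constant in \eqref{result:auxiliary-lemma-II-2} as $\gamma_2$ (a typo in the statement, as confirmed by the paper's own proof), and your explicit treatment of the degenerate case $\gamma_1+\gamma_2=0$ is a harmless addition.
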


\begin{proof}
 By~\cref{result:auxiliary-lemma-II-1,result:auxiliary-lemma-II-2}, $\{A_k\}_{k=0,\frac{1}{2},1,\frac{3}{2},...}$ is sequence of decreasing positive numbers. All in all, it holds for $k\geq 0$
 \begin{align*}
  \frac{1}{A_{k+1}} - \frac{1}{A_{k}}
  &=
   \frac{1}{A_{k+1/2}} - \frac{1}{A_{k}} + \frac{1}{A_{k+1}} - \frac{1}{A_{k+1/2}}\\
  &=
  \frac{A_{k} - A_{k+1/2}}{A_{k}A_{k+1/2}} + \frac{A_{k+1/2} - A_{k+1}}{A_{k+1/2}A_{k+1}} \\
  &\geq 
  \gamma_1 \frac{A_{k}}{A_{k+1/2}} + \gamma_2 \frac{A_{k+1/2}}{A_{k+1}} \\
  &\geq 
  \gamma_1 + \gamma_2.
 \end{align*}
 Thus, by utilizing a telescope sum and applying~\cref{result:auxiliary-lemma-II-3}, we obtain
 \begin{align*}
  \frac{1}{A_{k+1}} 
  &=
  \left(\frac{1}{A_{k+1}} - \frac{1}{A_{k}} \right)
  +
  \left(\frac{1}{A_{k}} - \frac{1}{A_{k-1}} \right)
  +
  ...
  +
  \left(\frac{1}{A_{1}} - \frac{1}{A_{0}} \right)
  +
  \frac{1}{A_0} \\
  &\geq 
  (k+1+p) (\gamma_1 + \gamma_2).
 \end{align*}
 Finally, the assertion (for $k\geq 1$) follows by inverting the inequality; for $k=0$ the assertion is identical with the assumption~\eqref{result:auxiliary-lemma-II-3}.
\end{proof}

Finally, we are able to prove \cref{theorem:convergence-convex-nonsmooth}.

\begin{proof}[Proof of \cref{theorem:convergence-convex-nonsmooth}]
  As long as $H^k - H^\star > 2 \,\mathrm{min}\left\{ \tfrac{L_1}{\beta_1}, \tfrac{L_2}{\beta_2} \right\} R^2$ for some $k\in \mathbb{N}_0$, by \cref{lemma:general-descent-am} and the monotonicity of $\{H^k\}_{k=0,1,...}$, it holds that
 \begin{align}
 \label{proof:convergence-convex-nonsmooth-aux:1}
  H^{k} - H^\star \leq \left(\frac{1}{2}\right)^k \left(H^0 - H^\star\right).
 \end{align}
 Thereby, there exists a minimal $m\geq 0$ such that $H^k - H^\star \leq 2 \,\mathrm{min}\left\{ \tfrac{L_1}{\beta_1}, \tfrac{L_2}{\beta_2} \right\} R^2$ for all $k\geq m$. Assuming $m\geq 1$,~\cref{proof:convergence-convex-nonsmooth-aux:1} holds for all $k\leq m-1$. In particular, it holds
 \begin{align*}
  2 \,\mathrm{min}\left\{ \tfrac{L_1}{\beta_1}, \tfrac{L_2}{\beta_2} \right\} R^2 < H^{m-1} - H^\star \leq \frac{1}{2^{m-1}} \left(H^0 - H^\star\right).
 \end{align*}
 Thus, it holds that
 \begin{align*}
  m < \mathrm{log}_2 \left( \frac{H^0 - H^\star}{\,\mathrm{min}\left\{ \tfrac{L_1}{\beta_1}, \tfrac{L_2}{\beta_2} \right\} R^2} \right).
 \end{align*}
 Consequently, in general (including the case $m=0$), it holds
 \begin{align}
 \label{proof:convergence-convex-nonsmooth-aux:2}
  m \leq \left[-1 + \left\lceil\mathrm{log}_2 \left( \frac{H^0 - H^\star}{\,\mathrm{min}\left\{ \tfrac{L_1}{\beta_1}, \tfrac{L_2}{\beta_2} \right\} R^2} \right)\right\rceil \right]_+=:m^\star.
 \end{align}
 
 By the monotonicity of $\{H^k\}_{k=0,\frac{1}{2},1,...}$, there holds $H^{k+1/2} - H^\star \leq H^k - H^\star \leq  2 \,\mathrm{min}\left\{ \tfrac{L_1}{\beta_1}, \tfrac{L_2}{\beta_2} \right\} R^2$ for $k\geq m$. Hence, by \cref{lemma:general-descent-am} it holds for all $k\geq m$ that
 \begin{align*}
  H^k - H^{k+1/2} &\geq \frac{\beta_1}{4 L_1 R^2} \left( H^k - H^\star \right)^2,\\ 
  H^{k+1/2} - H^{k+1} &\geq \frac{\beta_2}{4 L_2 R^2} \left( H^{k+1/2} - H^\star \right)^2.
 \end{align*}
 We define the sequence $\{A_n\}_{n=0,\frac{1}{2},1,...}$ with $A_n := H^{n+m} - H^\star$. Then the assumptions of \cref{lemma:technical-lemma-2} are fulfilled with
 \begin{align*}
  \gamma_1 := \frac{\beta_1}{4 L_1 R^2},\quad \gamma_2:= \frac{\beta_2}{4 L_2 R^2},\quad p:=
  \frac{2\left( \frac{\beta_1}{L_1} + \frac{\beta_2}{L_2} \right)^{-1}}{\,\mathrm{min}\left\{ \tfrac{L_1}{\beta_1}, \tfrac{L_2}{\beta_2} \right\}}=:p^\star.
 \end{align*}
 Thus, \cref{lemma:technical-lemma-2} yields for all $n\geq 0$
 \begin{align*}
  A_n \leq \frac{1}{\left(n + p \right) \left(\gamma_1 + \gamma_2\right)},
 \end{align*}
 and equivalently for all $k\geq m$
 \begin{align}
 \label{proof:convergence-convex-nonsmooth-aux:3}
  H^k - H^\star \leq \frac{4 R^2  \left( \frac{\beta_1}{L_1} + \frac{\beta_2}{L_2} \right)^{-1}}{k - m + p^\star}
  \leq \frac{4 R^2  \left( \frac{\beta_1}{L_1} + \frac{\beta_2}{L_2} \right)^{-1}}{[k - m^\star]_+ + p^\star}
 \end{align}
 Combining~\cref{proof:convergence-convex-nonsmooth-aux:1,proof:convergence-convex-nonsmooth-aux:3} proves the assertion.
\end{proof}

\begin{remark}
 In the case it holds $\,\mathrm{max}\left\{ \tfrac{L_1}{\beta_1}, \tfrac{L_2}{\beta_2} \right\}<\infty$, and the initial error satisfies $H^{0}-H^\star > 2 \,\mathrm{max}\left\{ \tfrac{L_1}{\beta_1}, \tfrac{L_2}{\beta_2} \right\} R^2$, the result of \cref{theorem:convergence-convex-nonsmooth} is in fact slightly pessimistic. The value of $m^\star$ can then be chosen significantly lower. By an analogous line of argumentation as in the above proof, one can conclude that,  $H^k - H^\star$ first contracts with a rate of $\frac{1}{4}$ for the first $k_1$ iterations, until $H^{k_1}-H^\star\leq 2 \,\mathrm{max}\left\{ \tfrac{L_1}{\beta_1}, \tfrac{L_2}{\beta_2} \right\} R^2$ for some $k_1\in\mathbb{N}_0$. Afterwards, the convergence behavior can be qualitatively predicted as in \cref{theorem:convergence-convex-nonsmooth}. Ultimately, $m^\star$ takes a lower value of the order
 \begin{align*}
  m^\star \approx \left\lceil \mathrm{log}_4 \left(\frac{H^0 - H^\star}{2 \,\mathrm{max}\left\{ \tfrac{L_1}{\beta_1}, \tfrac{L_2}{\beta_2} \right\} R^2}\right) \right\rceil + \left\lceil \mathrm{log}_2 \left( \frac{\mathrm{max}\left\{ \tfrac{L_1}{\beta_1}, \tfrac{L_2}{\beta_2} \right\}}{\mathrm{min}\left\{ \tfrac{L_1}{\beta_1}, \tfrac{L_2}{\beta_2} \right\}} \right) \right\rceil.
 \end{align*} 
 However, for the sake of a cleaner presentation, we have avoided the anyhow rather theoretical accurate description of the convergence for the general case.
\end{remark}

\begin{remark}[Comparison to the literature]
 Beck~\cite{Beck2015} establishes sublinear convergence for the alternating minimization applied to the model problem~\eqref{model-problem}, where only finite dimensional Euclidean spaces are considered equipped with $l_2$ norms; i.e., it holds $\beta_1=\beta_2=1$. Utilizing knowledge on proximal mappings with respect to the two blocks and associated sufficient decrease properties, the final results reads: For $k\geq 2$ it holds that
 \begin{align*}
  H^k - H^\star \leq \mathrm{max}\left\{ \left(\frac{1}{2} \right)^\frac{k-1}{2} \left(H^0 - H^\star \right), \frac{8\,\mathrm{min}\{L_1,L_2\}R^2}{k-1} \right\}.
 \end{align*}
 By only employing convexity and smoothness properties of the problem, our result also holds for general Banach spaces. Furthermore, focussing on the sublinear convergence, our result gains from the use of both $L_1$ and $L_2$, resulting in a potentially slightly lower multiplicative constant.
\end{remark}

\subsection{Smooth case in Euclidean spaces}
 
The tools, established in the previous section, allow for the improvement of a convergence result by Beck and Tetruashvili~\cite{Beck2013} on the alternating minimization applied to the smooth model problem~\eqref{model-problem} in a Euclidean setting. 

In the following, we consider Euclidean spaces for $\mathcal{B}_1$ and $\mathcal{B}_2$, equipped with $l_2$ norms, i.e., it holds $\beta_1=\beta_2=1$. Furthermore, let $g_1 \equiv g_2 \equiv 0$, ensuring smoothness of the model problem~\eqref{model-problem}. For this setting, sublinear convergence has been established in~\cite{Beck2013}, stating that it holds for all $k\geq 2$
 \begin{align*}
 H^k - H^\star \leq \frac{2\, \mathrm{min}\{ L_1,L_2\}R^2}{k-1}.
\end{align*}
In the following, we show that in fact for all $k\geq 0$ it holds in fact the improved result
 \begin{align}
 \label{result:sublinear-convergence-smooth-convex}
 H^k - H^\star \leq \frac{2 \left( \tfrac{1}{L_1} + \tfrac{1}{L_2} \right)^{-1}R^2 }{k + 2 R^2 \left( \tfrac{1}{L_1} + \tfrac{1}{L_2} \right)^{-1}\left( H^0 - H^\star \right)^{-1}} \leq \frac{2 \left( \tfrac{1}{L_1} + \tfrac{1}{L_2} \right)^{-1} R^2}{k}.
\end{align}
Thus, not only the subproblem with lower Lipschitz constant governs the performance of the alternating minimization. But the performance separately benefits from both Lipschitz constants.

For deriving~\cref{result:sublinear-convergence-smooth-convex}, we combine descent properties of the alternating minimization derived in~\cite{Beck2013}, and the auxiliary \cref{lemma:technical-lemma-2}. The following descent properties are a byproduct of the proof of Theorem~5.2 in~\cite{Beck2013}.

\begin{lemma}[Descent properties of the alternating minimization for the smooth case~\cite{Beck2013}]\label{lemma:descent-beck-2013}
Assume $\mathrm{(P1)}$--$\mathrm{(P7)}$ and $\mathrm{(P8c)}$ are satisfied with $\mathcal{B}_1$ and $\mathcal{B}_2$ being Euclidean spaces, equipped with $l_2$ norms, and $g_1 \equiv g_2 \equiv 0$. Furthermore, let $\{x^k\}_{k\geq 0}$ be the sequence generated by the alternating minimization, cf.\ \cref{algorithm:alternating-minimization}, and $H^k:=H(x^k)$. Then it holds for all $k\geq 0$
 \begin{align*}
  H^k - H^{k+1/2} &\geq \frac{1}{2L_1 R^2} \left(H^{k} - H^\star\right)^2, \\
  H^{k+1/2} - H^{k+1} &\geq \frac{1}{2 L_2 R^2} \left(H^{k+1/2} - H^\star\right)^2.
 \end{align*}
\end{lemma}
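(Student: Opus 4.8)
The plan is to establish the two claimed descent inequalities by specializing and refining the argument already developed for \cref{theorem:convergence-quadratic-growth-III}, exploiting the simplifications afforded by the smooth Euclidean setting. Since $g_1\equiv g_2\equiv 0$ and $\beta_1=\beta_2=1$, the objective reduces to $H=f$, and the optimality condition from \cref{lemma:optimality-conditions} collapses to the stationarity $\nabla_2 f(x^k)=0$, because the subdifferential of the zero function is $\{0\}$. First I would revisit the chain~\eqref{proof:quadratic-growth-III:aux-1b}: using Lipschitz continuity of $\nabla_1 f$ together with convexity (via the descent lemma~\eqref{model-property-lipschitz-f-1}), one bounds $\llangle \nabla_1 f(x^k), x_1^k-\bar{x}_1^k\rrangle$ in terms of $L_1\gamma\|x_1^k-\bar{x}_1^k\|_1^2$ and the functional decrement $\tfrac{1}{\gamma}[f(x^k)-f(x_1^k+\gamma(\bar{x}_1^k-x_1^k),x_2^k)]$.

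The key structural difference from the quadratic-growth proof is that here I would \emph{not} invoke a growth property to turn $\|x^k-\bar{x}^k\|^2$ into $(H^k-H^\star)$; instead I would bound $\|x_1^k-\bar{x}_1^k\|_1^2 \leq \|x^k-\bar{x}^k\|^2 \leq R^2$ directly using (P8c), which keeps the estimate purely geometric. Combining this with the vanishing of the second-block term (from $\nabla_2 f(x^k)=0$ and convexity of $f$), I expect to arrive at the inequality
\begin{align*}
 H^k - H^\star \leq L_1 R^2 \gamma + \frac{1}{\gamma}\left(H^k - H^{k+1/2}\right)
\end{align*}
valid for all $\gamma\in(0,1]$, where the second term uses the optimality of $x_1^{k+1}$ from~\eqref{alternating-minimization-step-1}. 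This is structurally the same intermediate bound that appears in the proof of \cref{lemma:general-descent-am}, just without the factor $\beta_1$.

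The crux is then the choice of $\gamma$. To obtain the claimed quadratic descent $H^k-H^{k+1/2}\geq \tfrac{1}{2L_1R^2}(H^k-H^\star)^2$ without a case distinction, I would optimize the right-hand side over $\gamma$, which suggests $\gamma^\star=\tfrac{1}{R}\sqrt{(H^k-H^{k+1/2})/L_1}$, or more robustly a fixed proportional choice $\gamma=\tfrac{1}{2L_1R^2}(H^k-H^\star)$ analogous to the second case of \cref{lemma:general-descent-am}; substituting and reordering should yield the stated estimate. The second inequality follows by the identical argument applied to the second half-step, with $L_1$ replaced by $L_2$ and $x^{k+1/2}$ playing the role of $x^k$, invoking $\nabla_1 f(x^{k+1})=0$ in place of the second-block stationarity. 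The main obstacle I anticipate is verifying that the chosen $\gamma$ stays in $(0,1]$; since $H^k-H^\star\leq 2L_1R^2$ need not hold in general here, I would have to argue that either the monotone decrease forces the relevant regime, or that the unconstrained optimum $\gamma^\star$ automatically satisfies $\gamma^\star\leq 1$ whenever the descent $H^k-H^{k+1/2}$ is small relative to $L_1R^2$ — which is precisely the situation where the quadratic (rather than linear) bound is the binding one. Reconciling this constraint cleanly, so that the single quadratic inequality holds for \emph{all} $k\geq 0$ as stated (rather than only past a threshold), is the delicate point, and I would resolve it by showing the optimal $\gamma^\star$ never exceeds $1$ under the given normalization.
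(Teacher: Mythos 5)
The paper itself does not prove this lemma: it is imported verbatim as a byproduct of the proof of Theorem~5.2 in~\cite{Beck2013}, so your attempt has to be measured against that argument. Your proposal has a genuine gap, and it sits exactly at the point you flag as delicate. The line-search inequality you start from,
\begin{align*}
 H^k - H^\star \leq L_1R^2\gamma + \frac{1}{\gamma}\left(H^k - H^{k+1/2}\right)\qquad\text{for all }\gamma\in(0,1],
\end{align*}
cannot produce the constant $\tfrac{1}{2L_1R^2}$. Writing $t := H^k - H^\star$ and $D:= H^k - H^{k+1/2}$, the right-hand side is minimized at $\gamma^\star=\sqrt{D/(L_1R^2)}$ (your first suggestion), where it equals $2R\sqrt{L_1 D}$; squaring gives only $D \geq \tfrac{t^2}{4L_1R^2}$, and your second suggestion $\gamma = \tfrac{t}{2L_1R^2}$ gives $t \leq \tfrac{t}{2} + \tfrac{2L_1R^2}{t}D$, i.e., again $D \geq \tfrac{t^2}{4L_1R^2}$ (both assuming the chosen $\gamma$ is even admissible). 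So ``substituting and reordering'' yields a constant off by a factor of two; the loss is intrinsic to~\eqref{proof:quadratic-growth-III:aux-1b}, which bounds the gradient-increment term by $L_1\gamma\|x_1^k-\bar{x}_1^k\|_1^2$ rather than the descent-lemma term $\tfrac{L_1\gamma^2}{2}\|x_1^k-\bar{x}_1^k\|_1^2$. Even if you replace it by the sharper segment estimate $D \geq \gamma t - \tfrac{L_1R^2}{2}\gamma^2$, you recover $\tfrac{1}{2L_1R^2}$ only when the unconstrained optimizer $\gamma^\star = t/(L_1R^2)$ lies in $(0,1]$; when $t>L_1R^2$ the best admissible choice $\gamma=1$ gives $D \geq t - \tfrac{L_1R^2}{2}$, and the desired bound $t - \tfrac{L_1R^2}{2} \geq \tfrac{t^2}{2L_1R^2}$ is equivalent to $\left(t - L_1R^2\right)^2 \leq 0$, which fails. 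Your plan to show that $\gamma^\star$ ``never exceeds $1$'' cannot work at $k=0$: nothing in (P1)--(P7), (P8c) bounds $H^0-H^\star$ by $L_1R^2$, so along this route a case distinction as in \cref{lemma:general-descent-am} is unavoidable --- and even with it you do not reach the claimed constant.

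The idea you are missing --- the one behind the citation to~\cite{Beck2013} --- is to exploit $g_1\equiv 0$ and the Euclidean structure directly, instead of moving along the segment towards $\bar{x}_1^k$. Exact minimization over the first block decreases $H$ at least as much as one gradient step: for every $h_1$,
\begin{align*}
 H^{k+1/2} \leq f(x_1^k+h_1,x_2^k) \leq H^k + \llangle \nabla_1 f(x^k), h_1\rrangle + \frac{L_1}{2}\left\|h_1\right\|_1^2,
\end{align*}
and taking the infimum over $h_1$ (attained at $h_1=-\tfrac{1}{L_1}\nabla_1 f(x^k)$ in the $l_2$ setting) gives $H^k - H^{k+1/2} \geq \tfrac{1}{2L_1}\|\nabla_1 f(x^k)\|_{1,\star}^2$. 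On the other hand, your (correct) observation $\nabla_2 f(x^k)=0$, combined with convexity of $f$ and Cauchy--Schwarz, yields $H^k - H^\star \leq \llangle \nabla_1 f(x^k), x_1^k-\bar{x}_1^k\rrangle \leq \|\nabla_1 f(x^k)\|_{1,\star}\, R$. Combining the two displays gives the first inequality with the constant $\tfrac{1}{2L_1R^2}$, for every $k\geq 0$, with no step-size restriction and no case distinction. The second inequality follows symmetrically from the stationarity $\nabla_1 f(x^{k+1/2})=0$; note that this stationarity holds at $x^{k+1/2}=(x_1^{k+1},x_2^k)$, not at $x^{k+1}$ as written in your proposal.
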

\noindent
Finally, by \cref{lemma:descent-beck-2013}, the assumptions of \cref{lemma:technical-lemma-2} are satisfied for $A_{k}:= H^k - H^\star$, and 
\begin{align*}
 \gamma_1 := \frac{1}{2L_1 R^2},\quad \gamma_2:= \frac{1}{2L_2R^2},\quad p:=\frac{1}{\left(\gamma_1 + \gamma_2\right) A_0} = \frac{2 R^2 \left( \tfrac{1}{L_1} + \tfrac{1}{L_2} \right)^{-1}}{ H^0 - H^\star}.
\end{align*}
Thus, the sublinear convergence result~\eqref{result:sublinear-convergence-smooth-convex} directly follows from \cref{lemma:technical-lemma-2}.

\section{Conclusions}
\label{sec:conclusions} 

In this paper, we established convergence of the alternating minimization applied to a two-block structured model problem within the class of non-smooth non-strongly convex optimization in Banach spaces. We considered three cases of relaxed strong convexity: quasi-strong convexity, quadratic functional growth, and plain convexity. Convergence rates have been provided -- of linear type for the first two cases, and of sublinear type for the third case. Opposing to previous works on the convergence analysis of the alternating minimization, we have considered a fairly general setup. Ultimately, by allowing to describe smoothness and convexity properties with respect to different norms, improved convergence rates have been derived in comparison to corresponding results in the literature~\cite{Beck2013,Beck2015}. In particular, the linear convergence  in the case of quadratic functional growth also holds without any feasible descent property as commonly required in the analysis of the general block coordinate descent~\cite{Luo1993,Necoara2019}. 

Our results have several implications. For instance, applications of the results in~\cite{Beck2013,Beck2015} can be immediately improved, e.g., for the iteratively reweighted least squares method~\cite{Beck2015}. Also the tools provided in this paper allow for a sharp problem-specific convergence analysis of iterative splitting schemes for coupled partial differential equations; this has been realized with similar but slightly simpler abstract convergence results in~\cite{Both2019}. 

Within the proofs of this work, it was never used that the norms $\|\cdot\|_1$, $\|\cdot \|_2$, and $\|\cdot\|$ are actually norms. The results could be directly relaxed measuring convexity and smoothness with respect to just semi-norms or something similar. This may allow to generalize our results even further. The same applies for the general block coordinate descent method.
 
\bibliographystyle{siamplain}

\end{document}